\newcommand{\C}{\mathbb {C}}
\newcommand{\Addresses}{{
  \bigskip
  \footnotesize

  C.~Cabrera, \textsc{Unidad Cuernavaca del Instituto de Matem\'aticas. UNAM, M\'exico}\par\nopagebreak
  \textit{E-mail:}, C.~Cabrera: \texttt{carloscabrerao@im.unam.mx}

  \medskip

  P.~Makienko  \textsc{Unidad Cuernavaca del Instituto de Matem\'aticas. UNAM, M\'exico}\par\nopagebreak
  \textit{E-mail:}, P.~Makienko: \texttt{makienko@im.unam.mx}}}
\newtheorem{theorem}{Theorem}
\newtheorem{lemma}[theorem]{Lemma}
\newtheorem{corollary}[theorem]{Corollary}
\newtheorem{proposition}[theorem]{Proposition}      \newtheorem{conjecture}{Open question}
\theoremstyle{definition}                             
\newtheorem*{definition}{Definition}
\title{On amenability and measure of maximal entropy for semigroups of rational maps: II.}
\author{Carlos Cabrera and Peter Makienko}
\begin{document}

\maketitle

\footnotetext{This work was partially supported by PAPIIT IG100523. MSC2010: 37F10, 43A07}
 \begin{abstract}

We compare dynamical and algebraic properties of  semigroups of rational maps. In particular, we show a version of the Day-von Neumann conjecture and give a partial positive answer to ``Sushkievich's problem''  for semigroups of rational maps. We relate these conjectures with Furstenberg's $\times 2 \times 3$ problem and prove a coarse version of Furstenberg's problem for semigroups of non-exceptional polynomials. 
\end{abstract}

\section{Introduction and statements}

We discuss versions of some classical problems in the context of semigroups of rational maps; these are collections of holomorphic endomorphisms of the Riemann sphere $\overline{\C}$ equipped with composition as a product. Namely, we prove the right amenable versions of the Day-von Neumann conjecture. Also, we confirm Sorensen's conjecture for polynomial semigroups. This implies that a left amenable semigroup of polynomials is embeddable into a group (see Corollary \ref{Cor.leftAmaneble}). Further, we discuss a criterion for the embeddability of a semigroup into a group. In particular, we give a solution to Sushkevich's problem for a class semigroups of rational maps (see Theorem \ref{th.embeddable}). Finally, we relate the previous questions with Furstenberg's $\times 2 \times 3$ problem and show a coarse version of this problem for semigroups of non-exceptional polynomials.   

We start with a version of the Day-von Neumann conjecture for semigroups of rational maps. Let us recall the Day-von Neumann conjecture for groups which was formulated by Day in \cite{DayAmenable}: \textit{A finitely generated group is amenable if and only if it does not contain a free subgroup of rank $2$.}

  This conjecture was proved to be false in the 1980s (see \cite{Olsha}).  For semigroups, the Day-von Neumann conjecture does not make sense in general, since every semigroup is isomorphic to a subsemigroup of an amenable semigroup (see \cite{DayAmenable}). For example, a semigroup of rational maps  $S$ is right amenable whenever $S$ contains a constant endomorphism of $\overline{\C}$, say $g(z)=c$. Indeed the delta measure $\delta_g$ on the algebra of all subsets of $S$ defines a right invariant mean.  Moreover, $S$ is amenable whenever $c\in \overline{\C}$ is a common fixed point of all elements of $S$.
  
  If $S$ is a subgroup of M\"obius transformations (i.e. rational maps of degree $1$), then the answer to the Day-von Neumann conjecture can be deduced from Tits alternative theorem for linear groups (see \cite{TitsAlter}). Moreover, the semigroup  $S=\langle 2x, 2x +1 \rangle\subset \mathit{Aff}(\C)$ is free (see part (3) of Theorem \ref{th.MP}), but the affine group $\mathit{Aff}(\C)$ is amenable as it is the semidirect product of abelian groups.

  Nevertheless, as shown in \cite{CMAmenability}, for semigroups of non-constant rational maps containing a non-injective element $R$,  the condition of right amenability is connected with dynamical invariants of $R$ such as the measure of maximal entropy. More precisely, in \cite{CMAmenability} the following facts were proved:

\begin{enumerate}

\item \textit{If a finitely generated non-exceptional semigroup of rational maps $S$  admits a finitely additive measure, invariant under  the right action of $S$ onto itself, then $S$ has an invariant probability measure on $\overline{\C}$ which is the measure of maximal entropy for every non-injective element of $S$.}

\item \textit{Moreover, the right amenability of finitely generated semigroups of polynomials is equivalent 
to the right amenability of just one representation which we called the Lyubich representation (see definitions below).} 
\end{enumerate}

Later on in \cite{PakovichAmenRat} some of the results of \cite{CMAmenability} were generalized. See also \cite{Tucker}, \cite{ZieveZhan} and   \cite{PakovichSemiProb} and the discussion below. In \cite{CMAmenability}  a right amenable  version of the Day-von Neumann conjecture was divided into two parts regarding rational maps:

 The first problem related to the Day-von Neumann conjecture is formulated as: \textit{a semigroup $S$ of rational maps is right amenable if and only if $S$ admits an invariant probability measure $\mu$ which is the measure of maximal entropy of every non-injective element in $S$}.

This question was positively answered in \cite{CMAmenability} for finitely generated semigroups of non-exceptional polynomials. For semigroups of rational maps not conjugated to polynomials only the sufficiency was proved. In \cite{PakovichAmenRat}, the equivalence was extended to arbitrary semigroups of polynomials. The second part is formulated as: \textit{a semigroup $S$ of rational maps, not containing a rank $2$ free subsemigroup,  admits an invariant measure which is the measure of maximal entropy for every non-injective element of $S$.}

This question was positively answered in \cite{PakovichAmenRat} for semigroups of polynomials and semigroups of so-called tame rational maps. Later on, in \cite{Tucker}  it was  answered positively for arbitrary semigroups of non-injective rational maps (see Theorem \ref{th.Tuckermeas} below).

Recall that every non-injective rational map $R$ admits a unique ergodic non-atomic invariant probability measure $\mu_R$ with entropy $\log(deg(R))$ (see \cite{Brolin}, \cite{FreireLopesMane} and \cite{LyubichErgodicTheory}). 
The measure $\mu_R$ is known as the measure of maximal entropy.  Let $E(R)$ be the semigroup  of all rational maps $Q$ leaving $\mu_R$ invariant and such that $\mu_Q=\mu_R$ whenever $Q$ is non-injective. A polynomial (rational) map $Q$ is \textit{exceptional} if either $deg(Q)\leq 1$ or  $Q$ is affinely (M\"obius) conjugated to $z^n$ (with $n\in \mathbb{Z}$) or to a Chebyshev polynomial (or to a Latt\`es map). We say that a family of polynomial (rational) maps $\mathcal{F}$ is \textit{non-exceptional} whenever $\mathcal{F}$ contains a non-exceptional polynomial (rational) map. The following theorem and corollary complete the results of \cite{Tucker}, \cite{CMAmenability} and  \cite{PakovichAmenRat}. and answer conjecture 5.2 in \cite{PakovichSemiProb}.

\begin{theorem}\label{th.ERNeumann}
 Let $S<Rat$ be a  non-exceptional semigroup. Then the following statements are equivalent:
 \begin{enumerate}
  \item $S$ is right amenable.
  \item $S\subset E(R)$ for a suitable $R\in S$.
  \item $S$ does not contain a  rank $2$ free subsemigroup.
  \end{enumerate}
   
   Furthermore, if $S$ is finitely generated, the following condition is equivalent to the previous:\\
   
   4. $S$ has sub-exponential growth.\\
  
Finally, if $S$ is isomorphic to a non-exceptional polynomial semigroup, then the following is equivalent to conditions (1)-(3). \\

5. $S$ is contained in a finitely generated semigroup of subexponential growth.
 
\end{theorem}

\medskip

\textbf{Remark}. The equivalence (1) and (2) in the theorem above was shown in \cite{CMAmenability} for finitely generated semigroups of polynomials. For  semigroups of polynomials, the equivalence (1)-(3) of the theorem above was proved in \cite{PakovichAmenRat}. In \cite{PakovichAmenRat}, the implication from (3) to (1) was shown for semigroups of so-called tame rational maps. Also the implications from (3) to (2) and from (3) to (4) follows from results of \cite{Tucker}. Finally  
 from part (4) of Theorem \ref{th.ERNeumann} and theorems 1.1 and 1.2 in \cite{Tucker}, it follows that the properties of linear growth and subexponential growth coincide for finitely generated non-exceptional semigroups of non-injective rational maps.
\medskip

\textbf{Remark}. The equivalence of (1) and (3) can be regarded as a positive answer to the right amenable version of the Day-von Neumann conjecture for semigroups of rational maps. 

\medskip

For a non-injective polynomial $P$,  the semigroup $E(P)$ is finitely generated (see for example \cite{AtelaHu}). Comparing (2), (4) and (5) rises the question:

\begin{conjecture} Does there exist a rational map $R$ such that $E(R)$ is not finitely generated?
 
\end{conjecture}
 
\begin{corollary}\label{cor.charrigham} For a non-exceptional semigroup $S<Rat$, the following are equivalent:
\begin{enumerate}
 \item $S$ is right amenable.
 \item Every subsemigroup of $S$ is right amenable.
 \item Every rank $2$ subsemigroup of $S$ is right amenable.
 \item The Lyubich representation $\rho$  of $S$ is right amenable.
\end{enumerate} 
\end{corollary}

\medskip

\textbf{Remark}. As shown in Theorem \ref{th.ERNeumann}, the notion of right amenability is a natural extension of the notion of non-exponential growth from the finitely generated case to the general case of semigroups of rational maps. It would interesting to have a similar  characterization of right amenable semigroups of holomorphic endomorphisms of complex manifolds to Theorem \ref{th.ERNeumann}. For example, holomorphic endomorphisms of $\mathbb{CP}^k$ have a unique measure of maximal entropy. So, the implication from (1) to (2) in Theorem \ref{th.ERNeumann} carries on in this case with analogous arguments. 

On the other hand, for higher dimensional compact manifolds, there exist free abelian semigroups of holomorphic endomorphisms with uncountably many measures of maximal entropy, each of these measures is unrelated to the measures of other elements in the semigroup. For example, let $R_1$ and $R_2$ be rational maps, consider the semigroup $\langle P_1,P_2\rangle$ of endomorphisms of  $\overline{\C} \times \overline{\C}$, with $P_1(z,w)=(R_1(z),w)$ and $P_2(z,w)=(z,R_2(w))$.

The situation with semigroups of endomorphisms of non-compact manifolds is more complicated. For instance, already in complex dimension one, semigroups of entire functions are challenging since there is no notion of measure of maximal entropy for these maps. Moreover, in this situation is not clear  when the Julia sets of permutable entire functions coincide. This is a known open question for entire maps due to Baker \cite{BakerWan}.

Now we define a class of semigroups which we conjecture coincides with the class of right amenable semigroups of holomorphic endomorphisms of either complex compact manifolds or non-compact manifolds but of finite topological degree.

For a set $X$, let $End(X)$  be the semigroup of surjective endomorphisms and $F \subset End(X)$ be a finite family. We will say that 
a semigroup $S\subset End(X)$ belongs to the class $F$ whenever there exists a system of generators $\{f_1,...,f_n,..\}$ of $S$ 
such that for every  $g\in S$  there exist $\alpha \in F$, 
numbers $m\geq 0$,  $k_i\geq 0$, for $i=1,...,m$, and  a permutation $\sigma\in \mathcal{S}_m$ so that
$$g=\alpha\circ  f_{\sigma(1)}^{k_{\sigma(1)}}\circ ... \circ f_{\sigma(m)}^{k_{\sigma(m)}}$$ 
and the number $$\sum_{i=1}^m k_i$$ is comparable with the lenght of $g$ with respect to the generators $f_i$ by a constant independent of $g$. Here, the family $F$ not necessarily intersects $S$.

Given a semigroup $S$,  if there exist a set $X$ and  a faithful representation $\phi:S\rightarrow End(X)$ such that $\phi(S)$  belongs to the class ${F}$, for a suitable finite family ${F}\subset End(X)$, then  we say that $S$ belongs to the class $\mathcal{F}(X)$.

For example, a nearly abelian semigroup of rational maps belongs to the class $\mathcal{F}(\overline{\C})$. For the definition and further properties  of nearly abelian semigroup, see \cite{HinkMartSemi}. Also the semigroup $C(R)$, of rational maps commuting with a given rational map $R$, belongs to the class $\mathcal{F}(\overline{\C})$ (see \cite{PakovichFiniteness}).
If two rational maps $Q, R$ satisfy the Levin relations (that is, $R\circ Q=R^2$ and $Q\circ R=Q^2$), then $\langle Q, R \rangle$ belongs to the class $\mathcal{F}(\overline{\C})$.

\begin{proposition}\label{Pr.RX} A semigroup $S$ is right amenable whenever $S$ belongs to the class $\mathcal{F}(X)$ for some set $X$.
\end{proposition}

\textbf{Remark}. Theorem \ref{th.ERNeumann} and theorem 4.2 in \cite{HinkMartSemi} implies that every right amenable semigroup $S$ of polynomials is nearly abelian and hence belongs to the class $\mathcal{F}(\C)$. Thus, the converse statement to Proposition \ref{Pr.RX} holds true for semigroups of polynomials.

Also, Theorem \ref{th.ERNeumann} shows that a right amenable semigroup of non-constant rational maps either does not contain exceptional maps or consists only of exceptional maps. But, as the example of monomial semigroups shows,  
the situation with semigroups of exceptional maps is  different.   

We denote by $MP$ the semigroup of monomial polynomials with composition. Hence $MP$ is generated by the semigroup $Z=\langle z^n\rangle_{n\in \mathbb{N}}$  and the homotety group $H=\{\lambda z: \lambda\in \mathbb{C}^*=\C\setminus 0\}$.

\begin{theorem}\label{th.MP} The following statements hold true. 

\begin{enumerate}
 \item $MP$ is amenable.
 \item For every $P\in MP$ with $deg(P)\geq 2$, the semigroups $E(P)$ and $E(P)\cap MP$ are amenable and  conjugated to $E(z^2)$ and $E(z^2)\cap MP$, respectively. 
 \item  For every $n\geq 2$, the semigroup
$E(z^n)=E(z^2)$ and contains free subsemigroups of any given rank. 
\end{enumerate}
Moreover,
\begin{enumerate}
\item[4.] Let  $U\subset H$ and $T\subset Z$ be semigroups. Then, the semigroup $S=\langle U,T \rangle \subset MP$ is right amenable and contains a free subsemigroup of rank $2$, whenever $U$ contains an element of infinite order.
  \end{enumerate}
\end{theorem}
 
Left amenability of semigroups of rational maps is more delicate than right amenability as we see next. 

A semigroup $S$ is called \textit{left cancellative} if for every $a,b,c \in S$  the equation 
$ab=ac$ implies $b=c$. Right cancellative semigroups are defined analogously. A semigroup $S$ is called \textit{cancellative} if it is both left and right cancellative. For example, any subsemigroup of a group is cancellative. A semigroup of surjective endomorphisms of a set $X$ is always right cancellative. 

Recall that $RIM(S)$ ($LIM(S)$) denotes the set of right (left) invariant means of the semigroup $S$. 

\begin{theorem}\label{th.RIMLIM}
Let $S$ be a  non-exceptional semigroup of rational maps,  then we have:
\begin{itemize}
\item If $RIM(S)\subset LIM(S)$, then $S$ is cancellative and does not contain free subsemigroups of rank $2$.
\item On the other hand, if $S$ is cancellative without free subsemigroups of rank two, then $S$ is amenable. 
 \end{itemize}
\end{theorem}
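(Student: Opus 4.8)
The plan is to treat the two implications separately, leaning on Theorem \ref{th.ERNeumann} throughout as the bridge between amenability and the dynamical/algebraic structure of $S$. For the first bullet, assume $RIM(S)\subset LIM(S)$; in particular $S$ is right amenable, so by Theorem \ref{th.ERNeumann} we have $S\subset E(R)$ for some $R\in S$ and $S$ contains no free rank $2$ subsemigroup. It remains to extract cancellativity from the hypothesis $RIM(S)\subset LIM(S)$. The key point is that a semigroup of surjective endomorphisms is automatically right cancellative (as remarked just before the theorem), so the work is to show left cancellativity: if $ab=ac$ in $S$ with $a$ non-injective, one wants to conclude $b=c$. Since all non-injective elements of $S$ share the measure of maximal entropy $\mu_R$ and $a\circ b = a\circ c$, composing with the common structure should force $b$ and $c$ to differ by a Möbius transformation in $Deck(a)$; the hypothesis that every right invariant mean is also left invariant is what rules out a nontrivial deck contribution. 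Concretely, I would use the standard fact (as in \cite{CMAmenability}) that $RIM(S)\subset LIM(S)$ forces the two-sided invariance that is incompatible with the ``deck ambiguity'' one sees in examples like $\langle z^2+c,-z^2-c\rangle$, and then invoke the rigidity statement quoted in the introduction (theorem 1.3 of \cite{Ye}, together with the discussion of common iterations) to upgrade ``$b,c$ agree up to deck'' to ``$b=c$.''

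For the second bullet, assume $S$ is cancellative and contains no free rank $2$ subsemigroup; we must show $S$ is amenable, i.e. produce a two-sided invariant mean. By Theorem \ref{th.ERNeumann}, absence of a free rank $2$ subsemigroup already gives that $S$ is right amenable and $S\subset E(R)$ for some $R$, so there is a right invariant mean and an invariant probability measure $\mu_R$ on $\overline{\C}$ which is the measure of maximal entropy of every non-injective element of $S$. The extra input is cancellativity, which should be leveraged as follows: a cancellative semigroup without free rank $2$ subsemigroups satisfies the Ore condition (this is a classical consequence — a cancellative semigroup that does not contain a free subsemigroup of rank two is left and right reversible, hence embeds in a group of fractions), and a cancellative semigroup embedding in an amenable group is itself amenable in the sense that it admits a two-sided invariant mean. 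So the strategy is: (i) show $S$ is reversible/Ore using cancellativity plus the no-free-subsemigroup hypothesis; (ii) conclude $S$ embeds in its group of (left and right) fractions $G$; (iii) observe $G$ inherits non-existence of free rank $2$ subgroups from $S$, hence $G$ is amenable by the Day--von Neumann theorem for groups as stated in the excerpt (for finitely generated $G$; in general one passes through finitely generated subgroups and uses that amenability is preserved under direct limits); (iv) restrict a (two-sided) invariant mean on $G$ to $S$ to obtain $LIM(S)=RIM(S)\neq\emptyset$, giving amenability of $S$.

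The main obstacle I anticipate is step (i)--(ii) of the second bullet: verifying that cancellativity together with ``no free rank $2$ subsemigroup'' genuinely yields the Ore/reversibility condition in the setting of rational-map semigroups, and that the resulting group of fractions still fails to contain a free rank $2$ subgroup. One subtlety is that a free rank $2$ subsemigroup of the fraction group need not come from a subsemigroup of $S$ sitting inside $S$ itself, so one must argue at the level of reduced words that a free subgroup of $G$ would descend (after clearing denominators) to a free subsemigroup of $S$ — this is where cancellativity is used a second time, to make the denominator-clearing injective. For the first bullet, the delicate point is the converse direction of the ``deck ambiguity'' argument: one must be careful that $RIM(S)\subset LIM(S)$ is strong enough to kill all of $Deck(a)$ for every non-injective $a\in S$ simultaneously, not just one at a time, and here non-exceptionality is essential because for exceptional maps (e.g. inside $MP$, cf. Theorem \ref{th.MP}) both cancellativity and the absence of free subsemigroups can fail dramatically even when invariant means abound. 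I would isolate these two points as the crux and treat the remaining bookkeeping (passing to finitely generated subgroups, restricting means, the automatic right cancellativity) as routine.
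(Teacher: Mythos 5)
Your use of Theorem \ref{th.ERNeumann} to dispose of the ``no free rank $2$ subsemigroup'' parts matches the paper, but both of your key remaining steps have genuine gaps. For the first bullet, the cancellativity argument is not a proof: from $a\circ b=a\circ c$ you cannot in general conclude that $c=\gamma\circ b$ for some $\gamma\in Deck(a)$ (this requires the deck group of $a$ to act transitively on fibers, which fails for a generic rational map, whose deck group is trivial while such coincidences are governed by components of the curve $a(x)=a(y)$), and no mechanism is given by which $RIM(S)\subset LIM(S)$ would ``kill the deck contribution.'' The paper's route is different and concrete: since $S$ is amenable and non-exceptional, theorem 29 of \cite{CMAmenability} gives, for $a,x,y$ with $ax=ay$ (all necessarily non-injective), common iterations $a^k=x^m=y^n$; hence $a^k$ commutes with $x$ and with $y$, so $ax=ay$ forces $x\circ a^k=y\circ a^k$, and right cancellativity (automatic for surjective endomorphisms) gives $x=y$.

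For the second bullet the proposal fails at step (iii): you invoke the Day--von Neumann statement for groups as a theorem, but the paper records it only as a conjecture, and it is in fact false for general abstract groups (Ol'shanskii-type counterexamples), so ``the fraction group $G$ has no free rank $2$ subgroup'' does not yield amenability of $G$; nor is there any reason the abstract group of fractions of a rational semigroup lies in a class (say, linear groups) where a Tits alternative would rescue this. Step (iv) is also not routine: a cancellative subsemigroup of an amenable group need not be amenable --- the paper's own example $\langle 2x,2x+1\rangle$ inside the affine group --- so one cannot simply restrict a mean from $G$ to $S$; the correct statement is the Ore-semigroup theorem that $S$ is amenable iff its group of fractions is, which still leaves you needing amenability of $G$, i.e.\ exactly the unproved point. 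The paper avoids the fraction group entirely: with $S\subset E(R)$ from Theorem \ref{th.ERNeumann}, it splits $S=S_0\sqcup S_1$ into injective and non-injective parts, notes $S_0$ is a finite group, and uses cancellativity once, dynamically: the Levin relations $R^n\circ Q^m=R^{2n}$ and $Q^m\circ R^n=Q^{2m}$ of Theorem \ref{th.Levinexc} can be cancelled to give a genuine common iteration $Q^m=R^n$ for every pair in $S_1$, whence $S_1$ is amenable by theorem 30 of \cite{CMAmenability} and $S$ is amenable as a union of amenable subsemigroups. You would need either to repair step (iii) (which amounts to proving a case of a false general statement by dynamical means, i.e.\ essentially redoing the paper's argument) or to switch to the paper's decomposition.
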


The following is an immediate corollary of Theorem  \ref{th.ERNeumann} and Theorem \ref{th.RIMLIM}.

\begin{corollary}
A right amenable semigroup of rational maps $S$ is amenable whenever is left cancellative. 
\end{corollary}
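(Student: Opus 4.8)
The plan is to chain the two preceding theorems together, inserting one elementary fact already noted just above Theorem~\ref{th.RIMLIM}: a semigroup of surjective self-maps of a set is automatically right cancellative. As in the running hypotheses of Theorems~\ref{th.ERNeumann} and~\ref{th.RIMLIM}, $S$ is taken to be non-exceptional; the closing remark explains why this cannot be dropped.

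First, every rational map is a surjective endomorphism of $\overline{\C}$, so from $g\circ f=h\circ f$ with $f$ surjective we get $g=h$; hence $S$ is right cancellative, and together with the hypothesis of left cancellativity, $S$ is cancellative. Second, $S$ being right amenable and non-exceptional, the equivalence $(1)\Leftrightarrow(3)$ of Theorem~\ref{th.ERNeumann} shows that $S$ contains no free subsemigroup of rank~$2$. Finally, $S$ is now a cancellative, non-exceptional semigroup of rational maps with no rank-$2$ free subsemigroup, so the second clause of Theorem~\ref{th.RIMLIM} applies verbatim and shows that $S$ is amenable. There is no residual work: the whole content lies in the two quoted theorems, which is why the statement is recorded as a corollary.

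The one point to keep in mind --- and the only real obstacle --- is that non-exceptionality is essential. For $\lambda=e^{2\pi i\alpha}$ with $\alpha$ irrational, the (exceptional) semigroup $\langle\lambda z,z^2\rangle$ is isomorphic to the positive Baumslag--Solitar monoid $\langle a,t\mid ta=a^2t\rangle^{+}$ via $\lambda z\mapsto a$, $z^2\mapsto t$; it is right amenable by Theorem~\ref{th.MP}(4), and since $\lambda$ has infinite order one checks directly that it is left cancellative (the sign ambiguity in $z^2\circ b=z^2\circ c$ cannot be realized inside the semigroup), yet it has exponential growth and is not left amenable. So the conclusion fails for exceptional $S$, and the hypothesis that $S$ be non-exceptional, inherited here from Theorems~\ref{th.ERNeumann} and~\ref{th.RIMLIM}, is indispensable.
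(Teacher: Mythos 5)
Your proposal is correct and is exactly the argument the paper intends (the corollary is recorded as an immediate consequence of Theorems~\ref{th.ERNeumann} and~\ref{th.RIMLIM}): right cancellativity is automatic for surjective endomorphisms, so left cancellativity gives cancellativity, right amenability of a non-exceptional $S$ excludes rank-$2$ free subsemigroups by Theorem~\ref{th.ERNeumann}, and the second clause of Theorem~\ref{th.RIMLIM} then yields amenability. Your side remark that non-exceptionality (implicit in both cited theorems, though absent from the corollary's wording) cannot be dropped is a genuine and correct observation, but be careful with the justification: exponential growth by itself does not preclude left amenability; the clean reason $\langle\lambda z, z^2\rangle$ fails to be left amenable is that the principal right ideals $z^2\circ S^1$ and $(\lambda z^2)\circ S^1$ consist of maps whose leading coefficients are even, respectively odd, powers of $\lambda$, hence are disjoint, so $S$ is not left reversible and therefore admits no left invariant mean.
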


For semigroups of polynomials, we have a stronger result: 

\begin{theorem}\label{th.three} Let $S$ be a non-exceptional left amenable semigroup of polynomials, then $LIM(S)=RIM(S)$.
\end{theorem}

The case for rational maps is still unknown, so the following question is natural:

\begin{conjecture}\label{conj.uno} Let $S$ be a non-exceptional left amenable semigroup of rational maps. Does the set of left invariant means coincide with the set of right invariant means?
\end{conjecture}

\textbf{Remark}. The theorem and the corollary above are connected with Sorensen's conjecture  which states: \textit{a right cancellative left amenable semigroup is left cancellative.} This conjecture was proved by Sorensen in his thesis for a class of so-called left measurable semigroups (left measurability condition is stronger than left amenability, see \cite{Klawe}). But in \cite{Klawe} Sorensen's conjecture was disproved in the non-measurable case.

Since every semigroup of rational maps is always right cancellative, then the first part of Theorem \ref{th.RIMLIM} confirms the Sorensen's conjecture for rational maps whenever $RIM(S)\subset LIM(S)$. Hence, for polynomials, Theorem \ref{th.RIMLIM} and Theorem \ref{th.three} immediately imply:

\begin{corollary}\label{cor.amenable}
A countable non-exceptional semigroup of polynomials $S$ is amenable if and only if the following two conditions are satisfied.
        \begin{itemize}
      \item $S$ does not contain  free  subsemigroups of rank $2$.                                                             \item $S$ is cancellative.
                                                                        \end{itemize}

        \end{corollary}

The cancellativity condition in the previous corollary leads to the important notion of embeddability of semigroups. We discuss this notion in the next subsection. 

\subsection{Suskievich theorem}

Motivated by a question of van der Waerden, in \cite{Sushkevich}, Sushkievich published a theorem which states that a cancellative finitely generated semigroup is always embeddable 
into a group. Later on, in \cite{Malcev},  Maltsev found a counterexample to Suskievich's statement and gave necessary and sufficient conditions for a semigroup to be embeddable into a group. As mentioned in \cite{Hollings}, Maltsev condition consists of infinitely many equations which, in practice, are difficult to check.   In \cite{OreSem}, O. Ore gave sufficient conditions of embeddability of a semigroup into a group:

\begin{theorem}[Ore Theorem]\label{th.Ore} A cancellative semigroup $S$ is embeddable into a group whenever every pair of principal left (right) ideals have non-empty intersection.
\end{theorem}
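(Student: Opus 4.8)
The plan is to carry out the classical construction of the group of left fractions of $S$. The ``right ideal'' version of the hypothesis reduces to the ``left ideal'' version: the opposite semigroup $S^{\mathrm{op}}$ is again cancellative, its principal left ideals are exactly the principal right ideals of $S$, and $S$ embeds into a group if and only if $S^{\mathrm{op}}$ does (into the opposite group). So I assume that $S$ is cancellative and that any two principal left ideals meet; unwinding this in terms of elements (a routine reduction covers the case where $S$ has no identity) one obtains the \emph{left Ore condition}
\[
(\ast)\qquad \text{for all } a,b\in S \text{ there exist } u,v\in S \text{ with } ua=vb.
\]

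First I would define, on $S\times S$, the relation $(a,b)\sim(c,d)$ if and only if there exist $u,v\in S$ with $ua=vc$ and $ub=vd$, and check that it is an equivalence relation: reflexivity and symmetry are immediate, and transitivity is obtained by taking two witnessing pairs and merging them via $(\ast)$ applied to the two ``connecting'' elements. I write $a^{-1}b$ for the class of $(a,b)$ and let $G$ be the set of these classes; the intended meaning of $a^{-1}b$ is ``apply $b$, then undo $a$''.

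Next I would equip $G$ with a multiplication: to multiply $a^{-1}b$ by $c^{-1}d$, use $(\ast)$ to choose $p,q\in S$ with $pb=qc$ and set $(a^{-1}b)(c^{-1}d)=(pa)^{-1}(qd)$. The substantive work is to verify that this is well defined --- independent of the auxiliary pair $(p,q)$ and of the chosen representatives --- which is a short diagram chase invoking $(\ast)$ and then cancelling; that the multiplication is associative (a longer chase of the same kind, built on the well-definedness just established); and that $a^{-1}a$ (a single class, the same for all $a$) is a two-sided identity while $b^{-1}a$ is a two-sided inverse of $a^{-1}b$ (here right cancellation collapses $pb=qb$ to $p=q$). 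This exhibits $G$ as a group.

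Finally I would define $\iota\colon S\to G$ by $\iota(s)=a^{-1}(as)$, and check three things: independence of $a$ (given $a,a'$, pick $ua=va'$ from $(\ast)$ and note $u(as)=v(a's)$); that $\iota$ is a homomorphism (in computing $\iota(s)\iota(t)$ via $p(as)=qb$, the choice $c:=pa$ gives $c(st)=q(bt)$ on the nose, so the product equals $\iota(st)$); and injectivity ($\iota(s)=\iota(t)$ yields $u,v$ with $ua=va$ and $u(as)=v(at)$, hence $u=v$ and then $s=t$ by cancellation). I expect the only real obstacle to be the care demanded by the well-definedness and associativity computations for the multiplication, where the order of factors and the repeated appeals to $(\ast)$ together with left \emph{and} right cancellation must be tracked precisely; the remainder is bookkeeping. (Alternatively one could simply quote the classical fact --- due to Ore, see also Clifford--Preston --- that a cancellative left-reversible semigroup possesses a group of left quotients into which it embeds, but I would prefer to include the construction for completeness.)
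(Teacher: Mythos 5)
Your proposal is correct: it is the classical group-of-left-fractions (Ore localization) construction, with the right-ideal case correctly reduced to the left one via the opposite semigroup, and the steps you flag as the real work (well-definedness and associativity of the multiplication on fractions, the routine passage to a monoid when $S$ lacks an identity) are exactly where the standard Ore-condition-plus-cancellation chases go through. Note, however, that the paper does not prove this statement at all --- it is quoted as a known theorem of Ore \cite{OreSem} --- so your argument simply supplies the classical proof that the paper takes as given, which is the alternative you yourself mention in the closing parenthesis.
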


The intersection property in   Theorem \ref{th.Ore} which is  known as right (left)  reversibility neither easy to check. For rational maps, this property was related with an orbit intersection property (see \cite{CMAmenability}).  Every left (right) amenable semigroup $S$ is left (right) reversible, moreover for a class of rational maps the notion of right (left) reversibility is equivalent to the notion of left (right) amenability (see \cite{PakovichAmenRat}). Hence by Theorem \ref{th.ERNeumann} and Theorem \ref{th.Ore} a semigroup of rational maps, which does not contain free subsemigroups of rank $2$, is embeddable into a group whenever it is cancellative. The latter statement together with Corollary \ref{cor.amenable}, immediately imply the following.

\begin{corollary}\label{Cor.leftAmaneble}
 A left amenable semigroup of polynomials is always embeddable into a group. 
\end{corollary}

Summarizing the discussion above, it seems that the cancellativy condition is enough for rational maps. Thus, Sushkievich statement is plausible in this case as a conjecture:

\begin{conjecture} Is it true that a  semigroup of rational maps is embeddable into a group whenever it is cancellative?
 
\end{conjecture}

The following theorem  together with results from \cite{CMAmenability} motivate the interest in the conjecture above.

\begin{theorem}\label{th.ptaktheorem}
Given a non-exceptional polynomial $P$, the following  statements hold true.  
\begin{enumerate}
\item The semigroup $E(P)$ is amenable if and only if $E(P)$ is embeddable into the group of conformal automorphisms of a suitable Riemann surface. 

\item Moreover, $E(P)$ is abelian if and only if $E(P)$ is embeddable into the group of conformal autormorphisms  of the Riemann sphere. 
\end{enumerate}

\end{theorem}
By Theorem \ref{th.Ore}, a left amenable semigroup of rational maps is embeddable into a group if and only if satisfies Sorensen's conjecture.

In support of Sushkievich's conjecture, we have the following results.

\begin{theorem}\label{th.embeddable}
 A cancellative semigroup of rational maps $S$ is embeddable into a group whenever one of the following conditions holds.
 \begin{enumerate}

    \item  $S$ is isomorphic to a subsemigroup of $MP$.
  
  \item $S\simeq \langle R,Q\rangle$ where $R,Q$ are non injective rational maps sharing a common superattracting fixed point.

  \item The semigroup $S$ is isomorphic to  the free product of right amenable subsemigroups.
  
 \end{enumerate}

\end{theorem}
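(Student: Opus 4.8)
The plan is to handle the three cases separately, in each case exhibiting an explicit overgroup, and relying on Ore's Theorem (Theorem \ref{th.Ore}) whenever the semigroup has enough reversibility. Throughout, recall that a semigroup of rational maps is automatically right cancellative, so the hypothesis ``cancellative'' amounts to left cancellativity.

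For part (1), I would first reduce to a subsemigroup $S'\subset MP$ via the given isomorphism; embeddability is an isomorphism invariant, so it suffices to embed $S'$. Write each element of $S'$ as $\lambda z^n$ with $\lambda\in\C^*$ and $n\ge 1$, which identifies $MP$ with the semidirect-type monoid $\C^*\rtimes (\Z_{\ge 1},\cdot)$ under the composition rule $(\lambda z^m)\circ(\mu z^n)=\lambda\mu^m z^{mn}$. The natural candidate overgroup is $G=\C^*\rtimes \Q^{*}_{>0}$ (allowing $z^{p/q}$ on a suitable covering, i.e. the group of ``monomial transformations'' with rational exponents), and the point is that the composition formula extends verbatim. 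Left cancellativity of $S'$ translates into an algebraic condition on the exponent/coefficient data that is exactly what is needed to show the map into $G$ is injective; here one must check that $S'$ does not contain a pair $\lambda z^n \ne \mu z^n$ with the same exponent acting the same way on iterates — but left cancellativity forbids precisely that. I expect this case to be essentially bookkeeping once the right overgroup is fixed.

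For part (2), suppose $S=\langle R,Q\rangle$ with $R,Q$ non-injective sharing a common superattracting fixed point, which we may normalize to $\infty$; then $R$ and $Q$ are polynomials (up to a Möbius change fixing $\infty$), and $S$ is a semigroup of polynomials. Here I would invoke the structure theory already available: by Theorem \ref{th.three} and the surrounding discussion, if $S$ is additionally left amenable the invariant means coincide and one can pass through $E(P)$; but in general I would argue directly that a cancellative two-generated polynomial semigroup is right reversible — using that the common superattracting fixed point forces the local multiplier/leading-coefficient data to embed $S$ into the solvable group of formal power series tangent to a monomial at $\infty$, or into $\mathrm{Aut}$ of a Riemann surface as in Theorem \ref{th.ptaktheorem}. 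Once reversibility is in hand, Ore's Theorem gives the embedding. The delicate point is establishing reversibility (equivalently, that any two principal right ideals $SR^aQ^b\cdots$ meet) without assuming amenability outright; the shared superattracting point is what makes the leading-coefficient cocycle abelian and hence lets one solve the word equations.

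For part (3), suppose $S\simeq \ast_{i} S_i$ is a free product of right amenable subsemigroups $S_i$ of rational maps. Each $S_i$, being right amenable and (after discarding exceptional degenerate cases, or noting the statement is about embeddability which is unaffected) cancellative, embeds into a group $G_i$ by parts already handled or by Theorem \ref{th.ERNeumann} together with Ore's Theorem; the subtlety is that right amenability alone does not give cancellativity, so here one uses that $S$ being cancellative forces each factor $S_i$ to be cancellative (a subsemigroup of a cancellative semigroup is cancellative), and a right amenable cancellative semigroup of rational maps is amenable, hence left reversible, hence embeddable by Ore. Then the universal property of free products embeds $\ast_i S_i$ into the free product of groups $\ast_i G_i$, because the canonical map $\ast_i S_i \to \ast_i G_i$ is injective on reduced words: a nontrivial reduced word in the $S_i$ stays a nontrivial reduced word in the $G_i$ since each $S_i\hookrightarrow G_i$ and the normal-form theorem for free products of groups detects nontriviality syllable by syllable. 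I expect the main obstacle in the whole theorem to be precisely this interface in part (3): upgrading ``right amenable'' to ``cancellative hence amenable hence embeddable'' for the factors, which is where the non-exceptionality hypotheses implicit in Theorem \ref{th.ERNeumann} must be tracked carefully (and why one may need to treat exceptional factors by the explicit monomial computation of part (1) or by direct inspection of Chebyshev/Lattès centralizers).
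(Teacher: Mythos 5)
Your part (3) is essentially the paper's argument and is fine (each factor is cancellative as a subsemigroup of $S$, right amenability gives reversibility, Ore embeds it into a group $G_i$, and the free product of the $S_i$ embeds into $\ast_i G_i$); your detour through ``right amenable cancellative implies amenable'' is unnecessary and would drag in non-exceptionality hypotheses, since right amenability plus cancellativity already feeds directly into Theorem \ref{th.Ore}. The genuine gaps are in parts (1) and (2). In part (1), the proposed overgroup $\C^*\rtimes \mathbb{Q}^*_{>0}$ with the rule $(\lambda,m)(\mu,n)=(\lambda\mu^m,mn)$ is not a group: for $m\geq 2$ the map $\mu\mapsto\mu^m$ is a surjective but non-injective endomorphism of $\C^*$, not an automorphism, so $(\lambda,m)$ has no well-defined inverse ($\lambda^{-1/m}$ is multivalued) and the composition formula does not ``extend verbatim''. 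Making rational exponents single-valued on ``a suitable covering'' is exactly the logarithmic lift that constitutes the paper's proof: lift each generator $\lambda_i z^{n_i}$ to the affine map $n_i z+\mathrm{Log}\,\lambda_i$, observe that the lift is a homomorphism only up to translations by multiples of $2\pi i$, kill that ambiguity by passing to the quotient $\Gamma/N$, and then use left cancellativity to get injectivity: if two lifts of distinct $g_1,g_2\in S$ differ by $z\mapsto z+\frac{p}{q}2\pi i$, composing on the left with an element $g_3$ of degree $q$ gives $g_3\circ g_1=g_3\circ g_2$, a contradiction. Your invocation of cancellativity (``no $\lambda z^n\neq\mu z^n$ with the same exponent acting the same way'') is not the relevant condition and does not address the branch ambiguity, which is where injectivity can actually fail; note that $MP$ itself is not left cancellative, e.g.\ $z^2\circ(\lambda z)=z^2\circ(-\lambda z)$, so the construction must use the hypothesis in precisely this way.

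In part (2) your plan is to establish right reversibility of $S=\langle R,Q\rangle$ and then apply Ore, but this cannot work in general: two maps with a common superattracting fixed point can generate a free semigroup of rank two (for instance two polynomials in ping-pong position, which always share $\infty$), and a free semigroup of rank two is not reversible --- yet it is embeddable, into a free group. So no analysis of the leading-coefficient cocycle can produce the reversibility you want; moreover the germs of $R$ and $Q$ at a superattracting point are non-invertible, so they do not embed $S$ into any group of formal power series (and, incidentally, a shared superattracting fixed point at $\infty$ does not make the maps polynomials, since finite poles are allowed). The paper's route is the dichotomy of Theorem \ref{th.ZieveZhan}: either $S$ is free, hence embeddable outright, or there is a relation ($R^n\circ Q^m$ commutes with $R^k$), which in the exceptional case places $S$ inside $MP$ and reduces to part (1), and in the non-exceptional case forces $R$ and $Q$ to share the measure of maximal entropy, so $S$ is right amenable by Theorem \ref{th.ERNeumann} and then embeddable by Theorem \ref{th.Ore}. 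Without this free-versus-relation dichotomy, your argument for part (2) does not go through.
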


Finally, according to the theorems above, we propose the following general conjecture.

\begin{conjecture}
 Does every finitely generated semigroup of rational maps contain a subsemigroup of finite index which is a free product of  its right amenable subsemigroups? 
\end{conjecture}

For cancellative semigroups the following version seems to hold true.
\begin{conjecture}\label{conj.free}
Is it true that every finitely generated cancellative semigroup of rational maps is a free product of its amenable subsemigroups?
\end{conjecture}

Last conjectures  may hold true for countably generated semigroups of rational maps but, as noticed by the referee, it could rather  be an amalgameted free product. 

These conjectures remind  the Klein-Maskit combination theorems in the setting of semigroups of rational maps.

\subsection{The connection with Furstenberg $\times 2 \times 3$ problem.}

To establish the connection of the ideas and discussion above with Furstenberg's problem, we start with the following theorem. 

\begin{theorem}\label{th.leftKoopman}
 A semigroup of rational maps with left amenable Koopman (anti)-representation admits a common invariant probability measure on $\overline{\C}$. 
\end{theorem}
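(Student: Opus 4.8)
The plan is to obtain the common invariant measure by averaging an arbitrary seed measure over the semigroup, the average being taken against a left-invariant mean furnished by the amenability hypothesis. This is the Koopman/semigroup analogue of the classical fact that an amenable group acting continuously on a compact space preserves a probability measure, and it uses almost nothing about rational maps beyond continuity and compactness of $\overline{\C}$.

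First I would unwind the hypothesis. Write $K\colon S\to B(C(\overline{\C}))$ for the Koopman anti-representation (acting, say, on $C(\overline{\C})$), $K_gf=f\circ g$. Since every rational map is a continuous surjection of $\overline{\C}$ onto itself, each $K_g$ is a linear isometry, so all orbits are bounded and the matrix coefficients
\[
c_{f,\nu}\colon g\longmapsto \langle K_gf,\nu\rangle=\int f\circ g\,d\nu \qquad (f\in C(\overline{\C}),\ \nu\in C(\overline{\C})^{*})
\]
lie in $\ell^{\infty}(S)$. The subspace $\mathcal A\subset\ell^{\infty}(S)$ they span is invariant under the left translations $(\ell_h\phi)(g)=\phi(hg)$, because $\ell_h c_{f,\nu}=c_{f\circ h,\nu}$. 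Left amenability of $K$ gives (after unwinding the definition, if necessary) a \emph{left-invariant mean} $m$ on $\mathcal A$: a positive unital functional with $m(\ell_h\phi)=m(\phi)$ for all $h\in S$. If instead amenability of $K$ is phrased through a Reiter-type net $(\mu_i)\subset P(\overline{\C})$ with $\|g_*\mu_i-\mu_i\|_{\mathrm{TV}}\to 0$ for each $g$, one either extracts such an $m$ by Banach--Alaoglu or passes directly to a weak-$*$ limit of the $\mu_i$, again using compactness of $\overline{\C}$.

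Next, fix any probability measure $\nu_0$ on $\overline{\C}$ (for instance $\nu_0=\delta_{x_0}$, so that $c_{f,\nu_0}(g)=f(g(x_0))$ records the value of $f$ along the forward orbit of $x_0$) and define $\mu\colon C(\overline{\C})\to\C$ by $\mu(f)=m(c_{f,\nu_0})$. Positivity and $\mu(1)=1$ pass from $m$ to $\mu$, so by the Riesz representation theorem $\mu$ is a probability measure on $\overline{\C}$. Invariance is then immediate: for $h\in S$,
\[
\mu(f\circ h)=m(c_{f\circ h,\nu_0})=m(\ell_h c_{f,\nu_0})=m(c_{f,\nu_0})=\mu(f),
\]
that is, $\int f\,d(h_*\mu)=\int f\,d\mu$ for all $f\in C(\overline{\C})$, hence $h_*\mu=\mu$. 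Since $h\in S$ was arbitrary, $\mu$ is a common invariant probability measure, which proves the theorem.

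The main obstacle is not the averaging step but matching whatever definition of left amenability of a representation is adopted to the usable form ``there is a left-invariant mean on the space of matrix coefficients'': if amenability is defined via almost-invariant vectors, or via an invariant state on the operator algebra generated by $\{K_g\}$, a short preliminary lemma (a weak-$*$ compactness plus Hahn--Banach extraction) is needed. One must also keep the bookkeeping straight --- the ``left'' in left amenability must be paired with the translations $\ell_h\colon\phi(g)\mapsto\phi(hg)$ and with the anti-representation convention $K_{gh}=K_hK_g$ --- so that the measure produced is forward-invariant, $h_*\mu=\mu$, rather than satisfying $h^{*}\mu=\mu$; and compactness of $\overline{\C}$ is genuinely used, the analogous statement being false on $\C$, where mass can escape to infinity under averaging.
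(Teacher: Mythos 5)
Your proposal is correct and follows essentially the same route as the paper: fix a seed probability measure, send each $f\in \mathcal{C}(\overline{\C})$ to the matrix coefficient $g\mapsto\int f\circ g\,d\nu_0$ in $L_\infty(S)$, note this intertwines $K_h$ with the left translation $l_h$, apply the left-invariant mean guaranteed by left amenability of the Koopman (anti)-representation (the paper's definition is exactly a mean invariant on the closed span of these matrix coefficients and constants, so your worry about reconciling definitions does not arise), and conclude via Riesz representation. The only cosmetic difference is that the paper fixes one auxiliary measure $\sigma$ and works with the closure of the image of the map $H$, while you phrase it through general matrix coefficients; the argument is the same.
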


As an immediate corollary, we have:
\begin{corollary}\label{cor.leftamena}
 A left amenable semigroup of rational maps admits a common invariant probability measure on $\overline{\C}$. 
\end{corollary}

According to \cite{CMAmenability}, the semigroup $\langle z^2+c,-z^2-c \rangle\subset E(z^2+c)$ is not left amenable.  Hence, the reciprocal of Corollary \ref{cor.leftamena} is  not true. Moreover, any non-left amenable semigroup of polynomials (for example, a free semigroup of polynomials) admits an invariant measure on $\overline{\C}$, namely the delta measure $\delta_\infty$ based at infinity. In other words, a common invariant measure $\mu$ for rational maps $Q$ and $R$ may not generate a left invariant mean for the semigroup $\langle Q, R \rangle$. But, by Theorem \ref{th.ERNeumann}, if $\mu$ is the measure of maximal entropy for $Q$ and $R$, then $\mu$ necessarily generates a right invariant mean for $\langle Q, R\rangle$ and, moreover by Theorem \ref{th.three} it generates a left invariant mean whenever $Q$ and $R$ are non-exceptional polynomials generating a left amenable semigroup.

Besides, we can not identify the invariant measure given by Corollary \ref{cor.leftamena} for a left invariant mean. By theorem 3 in \cite{CMAmenability}, every right invariant mean generates the same measure of maximal entropy.

 In general, a right amenable semigroup of polynomials may have non-atomic invariant measures apart from the measure of maximal entropy. For example, as shown in theorem 28 of  \cite{CMAmenability}, the semigroup $\langle -z^2-\lambda z^4, z^2+\lambda z^4 \rangle$ is right amenable for every $\lambda\in \C$. For sufficiently small $\lambda$, the polynomials $z^2+\lambda z^4$ and $-z^2-\lambda z^4 $ are hyperbolic and  share an invariant topological disc $D$ which is the closure of the immediate component of attraction of $0$. Thus, these polynomials share the same invariant harmonic measure on $\partial D$ which is the pull-back of the one-dimensional Lebesgue measure on the unit circle with respect to the B\"ottcher coordinate.  Note that $z^2+\lambda z^4$ and $-z^2-\lambda z^4 $ do not share a common iteration but share the  common right factor $z^2$ and, the one dimensional Lebesgue measure on the unit circle is the measure of maximal entropy for $z^2$.

From the disscusion above we arrive to an old general dynamical question which we formulate in our setting as follows:

\textit{Given two non-injective rational maps $Q$ and $R$;  how big  $M(Q)\cap M(R)$ should be in order to induce a relation in the semigroup $\langle Q, R \rangle$?}  

Here $M(R)$ denotes the space of $R$-invariant probability measures. If $Q$ and $R$ are two rational maps sharing the measure of maximal entropy then by Theorem \ref{th.ERNeumann} the semigroup  $\langle Q, R \rangle$ is not free (see \cite{LevinPrytyckiRel} and \cite{Ye}). On the other hand, if $Q$ and $R$ do not share the measure of maximal entropy, then $\langle Q , R  \rangle$ contains a free subsemigroup of rank $2$ (see \cite{Tucker} and Theorem \ref{th.ERNeumann}).

For free semigroups we show in next proposition that there exists a  rank $2$ semigroup $S$ of rational maps not admiting a finite invariant measure. 
On the contrary, the semigroup $S=\langle \lambda z^2, z^2\rangle$ with $\lambda=2\pi i\alpha$ and $\alpha\in \mathbb{R}\setminus \mathbb{Q}$ is free but the one-dimensional Lebesgue measure on the unit circle is $S$-invariant (see part (2) of Theorem \ref{th.MP} or Theorem \ref{th.ZieveZhan}, below). 

\begin{proposition}\label{pr.semigroupwithout}
 There exists a rank $2$ free semigroup of rational maps $S=\langle Q,R \rangle$  such that $Q$ and $R$ do not share a common invariant measure. In particular, $S$ is free and, more, is not Koopman left amenable.
\end{proposition}

 Proposition \ref{pr.semigroupwithout} is an  application of theorem 2.3 of  \cite{HinkMartSemi}. In that paper, the authors also adapted the arguments of the proof of Klein's combination theorem in order to produce a geometrical condition under which a semigroup of rational maps is a free product of proper subsemigroups. According to Proposition \ref{pr.semigroupwithout} 
such semigroups do not admit  invariant measures regardless of their subsemigroups.

 We call a measure $\mu$ a \textit{discrete measure} when $\mu$ is a finite linear combination of delta measures $\delta_a$ based at the points $a\in \overline{\C}$. A discrete measure is invariant if and only if it is supported on a periodic cycle.

The following proposition is an application of theorem 1.2 in  \cite{BakerdeMarco} and theorem 1.5 in \cite{yuanzhang}.
 
\begin{proposition}\label{pr.LdM} Let  $Q$ and $R$ be rational maps. If $M(Q)\cap M(R)$ contains infinitely many non linearly dependent discrete measures, then $Q$ and $R$ share  the same measure of maximal entropy.
\end{proposition}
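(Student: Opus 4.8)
The plan is to translate the measure-theoretic hypothesis into the purely dynamical statement that $Q$ and $R$ have infinitely many common preperiodic points, and then to quote the equidistribution theorems of Baker--DeMarco and Yuan--Zhang. Throughout one reads ``infinitely many non linearly dependent discrete measures'' as: the set of discrete measures contained in $M(Q)\cap M(R)$ is not contained in any finite dimensional subspace of the space of signed measures, i.e.\ it contains an infinite linearly independent family. We may also assume $\deg Q\ge 2$ and $\deg R\ge 2$, since otherwise one of the two maps has no measure of maximal entropy; note moreover that a M\"obius map carrying infinitely many linearly independent discrete invariant measures must be of finite order, so the degree-one case is anyway degenerate and outside the scope of the statement.

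First I would recall the structure of a discrete invariant measure, which is already implicit in the remark preceding Proposition \ref{pr.semigroupwithout}: if $\mu=\sum_i a_i\,\delta_{x_i}$ satisfies $Q_*\mu=\mu$, then comparing atoms forces the finite set $\{x_i\}$ to be forward invariant under $Q$, hence a union of $Q$-periodic cycles, with $a_i$ constant along each cycle; the same holds for $R$. Consequently every discrete $\mu\in M(Q)\cap M(R)$ is supported on a finite set of points that are simultaneously $Q$-periodic and $R$-periodic.

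Next I would carry out the reduction. Let $\{\mu_k\}_{k\ge1}\subset M(Q)\cap M(R)$ be an infinite linearly independent family of discrete measures, let $E_k$ be the (finite) support of $\mu_k$, and set $E=\bigcup_k E_k$. If $E$ were finite, all the $\mu_k$ would lie in the finite dimensional space of measures supported on $E$, contradicting linear independence of the infinite family $\{\mu_k\}$. Hence $E$ is infinite, and by the previous paragraph every point of $E$ is periodic for both $Q$ and $R$; in particular $Q$ and $R$ share infinitely many periodic, a fortiori preperiodic, points. Now I invoke theorem 1.2 of \cite{BakerdeMarco} (and theorem 1.5 of \cite{yuanzhang} in the general setting): two rational maps of degree at least $2$ on $\overline{\C}$ with infinitely many common preperiodic points have the same measure of maximal entropy. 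Applied to $Q$ and $R$, this yields the conclusion.

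The argument has essentially no obstacle of its own: the entire weight rests on the unlikely-intersection/equidistribution theorem of Baker--DeMarco and Yuan--Zhang, which is used here as a black box. The only points requiring genuine (though elementary) care are the reduction from ``infinitely many linearly independent discrete invariant measures'' to ``infinitely many common periodic points'' — where one must be slightly attentive to the possibility that many distinct measures are supported on the same small finite set — and the dismissal of the exceptional degree-one situations, where the phrase ``measure of maximal entropy'' loses its meaning.
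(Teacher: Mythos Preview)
Your proof is correct and follows essentially the same route as the paper: the paper's own proof consists of a single sentence saying that the proposition ``follows from theorem 1.2 in \cite{BakerdeMarco} and theorem 1.5 in \cite{yuanzhang}'', leaving the reduction from infinitely many independent discrete invariant measures to infinitely many common (pre)periodic points entirely implicit. You have simply made that reduction explicit, which is appropriate.
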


Note that if $M(Q)\cap M(R)$ contains all discrete invariant measures based on the fixed points of $R^2$, for maps $R$ and $Q$ with $deg(Q)=deg(R)$, then $Q=R$ whenever $R^2$ has the maximal number of fixed points. This is because  $Q$ and $R$ coincide on each fixed point of $R^2$, which induces more equations than parameters.

Summarizing all the above, we propose the following conjecture.

\begin{conjecture}\label{conjetura1} Assume two rational maps $Q$ and $R$ admit a common ergodic invariant probability measure $\mu$. Is it true that either $\mu$ is discrete or the maps $Q$ and $R$ share the measure of maximal entropy? 
 
\end{conjecture}

Since the support of an invariant measure  is an invariant compact subset of the Riemann sphere, this  conjecture may be stated in terms of invariant compact sets.

Also note that Open question \ref{conjetura1} is related to the   Furstenberg $\times 2$  $\times 3$ problem, which we reformulate as follows:

\textit{For the semigroup $\langle z^2, z^3 \rangle$, the only non-atomic ergodic invariant probability measure on $\overline{\C}$ is the one-dimensional Lebesgue measure  on the unit circle.}

For further information and develoments on Furstenberg's problem, see  \cite{Furstenberg},  \cite{JohnsonFurs} and \cite{RudolphFurs}.

The one-dimensional Lebesgue measure on the unit circle is the measure of maximal entropy for every element of $\langle z^2, z^3\rangle$. The semigroup 
$S=\langle z^2, z^3 \rangle$ is  exceptional and abelian and, hence, amenable with $RIM(S)=LIM(S)$. Also $z^2$ and $z^3$ are primitive polynomials (see definition below).

To support the previous conjecture, we prove the following theorem.

\begin{theorem}\label{th.Theorem5}
 Let $S$ be a semigroup of  non-injective polynomial maps admiting  an invariant probability measure $\mu$. If the support $supp(\mu)$ belongs to the Fatou set $F(P)$, for $P\in S$, then either $\mu$ is discrete  or every element of $S$ shares a common iterate with $P$. In particular, $S$ is amenable. 
\end{theorem}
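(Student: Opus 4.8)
The plan is to reduce, via Poincar\'e recurrence and the classification of Fatou components, to the case where $\mu$ charges a cycle of Siegel disks of $P$, then to show that every element of $S$ acts on that cycle as a rotation in $P$'s own linearising coordinate, and finally to invoke the classification of commuting polynomials.

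\textbf{Step 1 (reduction).} Put $X=\operatorname{supp}(\mu)$. For every $g\in S$ one has $g(X)\subseteq X$ (if $V$ is a neighbourhood of $g(x)$ with $x\in X$, then $g^{-1}(V)$ is a neighbourhood of $x$, so $\mu(V)=\mu(g^{-1}(V))>0$), so $X$ is a compact forward $S$-invariant subset of $F(P)$. If $\mu$ has an atom at $\infty$, write $\mu=\alpha\delta_\infty+(1-\alpha)\nu$; since every $g\in S$ is a polynomial, $g(\infty)=\infty$, and $g_*\mu=\mu$ forces $g_*\nu=\nu$, so $\nu$ is $S$-invariant, is non-discrete whenever $\mu$ is, and $\operatorname{supp}(\nu)\subseteq\operatorname{int}K(P)$. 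Replacing $\mu$ by $\nu$ we may thus assume $X$ is bounded; being compact and forward-invariant under each $g\in S$, it then lies in $K(g)$ for every $g\in S$. By Poincar\'e recurrence $\mu$-almost every point is $P$-recurrent; by Sullivan's no-wandering-domains theorem and the classification of periodic Fatou components, a $P$-recurrent point of $F(P)\cap K(P)$ lies either on an attracting or parabolic periodic cycle, or on a $P^{k}$-invariant analytic circle inside a periodic Siegel disk. Since $P$ has only finitely many non-repelling cycles and only finitely many cycles of Fatou components, $\mu$ splits as $\mu=\mu_{\mathrm{at}}+\mu_{\mathrm{rot}}$, with $\mu_{\mathrm{at}}$ a \emph{finite} sum of point masses at periodic points and $\mu_{\mathrm{rot}}$ supported on the invariant circles inside the finitely many cycles of Siegel disks of $P$. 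Hence either $\mu=\mu_{\mathrm{at}}$ is discrete, or $\mu_{\mathrm{rot}}\ne 0$; assume the latter and fix a cycle $\mathcal D=\Delta_0\cup\dots\cup\Delta_{p-1}$ of Siegel disks of $P$ with $\mu(\Delta_0)>0$ and $X\cap\Delta_0$ a non-empty union of invariant circles. Since $X\subseteq F(P)$ and $\partial\Delta_0\subseteq J(P)$, one has $X\cap\overline{\Delta_0}\subseteq\Delta_0$, so $K_0:=X\cap\Delta_0$ is a compact subset of $\Delta_0$.

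\textbf{Step 2 (the main step: every $g$ acts on $\mathcal D$ as a rotation).} Fix $g\in S$ and an invariant circle $C\subseteq K_0$. The image $g(C)\subseteq X\subseteq F(P)$ is a non-degenerate continuum; it avoids $A_\infty(P)$ (as $X$ is bounded) and $J(P)$, it avoids the strictly pre-periodic Fatou components (which carry no $\mu$-mass, hence miss $X$), and it cannot meet an attracting or parabolic basin component $W$: there $X\cap W$ reduces to at most the periodic point of $W$, which is isolated in $W$, so $g(C)\cap W\ne\emptyset$ would disconnect the continuum $g(C)$. Therefore $g(C)$ lies in a single periodic Siegel disk of $P$; there being finitely many cycles of Siegel disks, after replacing $g$ by a suitable power $g^{q}$ we may assume $g^{q}$ maps each invariant circle of $X$ inside $\mathcal D$ back into $\mathcal D$ and, enlarging $q$, that $g^{q}(\Delta_j\cap X)\subseteq\Delta_j$ for every $j$. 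Then $(g^{q})_*(\mu|_{\Delta_0})$ is supported in $\overline{\Delta_0}$ and is $\le(g^{q})_*\mu=\mu$, hence $\le\mu|_{\Delta_0}$, and it has the same total mass as $\mu|_{\Delta_0}$; therefore $(g^{q})_*(\mu|_{\Delta_0})=\mu|_{\Delta_0}=:m$, a compactly supported measure on $\Delta_0$ with positive mass off the centre. Transport to $\mathbb{D}$ by the linearising coordinate $\phi\colon\Delta_0\to\mathbb{D}$ of $P^{p}|_{\Delta_0}$. If $g^{q}|_{\Delta_0}$ were not injective, iterating it would, by the Denjoy--Wolff theorem, push any invariant probability measure to a single atom, contradicting the circle content of $m$; hence $g^{q}|_{\Delta_0}$ is an automorphism of $\Delta_0$. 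Now $m$ is invariant under the subgroup of $\operatorname{Aut}(\Delta_0)$ generated by $g^{q}|_{\Delta_0}$ and $P^{p}|_{\Delta_0}$; being a compactly supported finite measure on the hyperbolic disk $\Delta_0$, it has a unique barycentre, fixed by this subgroup, and — being fixed by the irrational rotation $P^{p}|_{\Delta_0}$ — equal to the centre of $\Delta_0$. Hence $g^{q}|_{\Delta_0}$ fixes the centre, i.e.\ it is a rotation in the $\phi$-coordinate, and commutes with $P^{p}|_{\Delta_0}$. By the identity theorem, $g^{q}\circ P^{p}=P^{p}\circ g^{q}$ on all of $\C$.

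\textbf{Step 3 (conclusion), and the main obstacle.} A polynomial with a Siegel disk is not affinely conjugate to $z\mapsto z^{d}$ nor to a Chebyshev polynomial, so $P^{p}$ is non-exceptional; by the Ritt--Julia classification of commuting polynomials, any polynomial commuting with $P^{p}$ is, together with $P^{p}$, an iterate of a common polynomial, hence shares a common iterate with $P^{p}$ and \emph{a fortiori} with $P$. Applying this to each $g\in S$ shows every element of $S$ shares a common iterate with $P$; and if $g_1^{a}=P^{b}$, $g_2^{c}=P^{d}$ then $g_1^{ad}=P^{bd}=g_2^{bc}$, so any two elements of $S$ have a common iterate, whence every finitely generated subsemigroup of $S$ has polynomial growth and $S$ is amenable. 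The heart of the argument — and the expected difficulty — is Step 2: first, the combinatorial control showing that the images of the invariant circles cannot leak out of the union of Siegel disks, together with the reduction (after passing to a power of $g$) to $g^{q}(\Delta_j\cap X)\subseteq\Delta_j$; and second, the rigidity that an iterate of $g$ preserving $\mu|_{\Delta_0}$ is forced to be an automorphism of $\Delta_0$ (no branching, via Denjoy--Wolff) and in fact a rotation with the \emph{same} centre as $P$'s Siegel disk (the barycentre argument). Everything after Step 2 is soft.
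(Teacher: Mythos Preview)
Your Step~1 parallels the paper's reduction. The divergence, and a real gap, is in Step~2.

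You invoke Denjoy--Wolff for $g^{q}|_{\Delta_0}$, but you have only established $g^{q}(X\cap\Delta_0)\subseteq\Delta_0$, not $g^{q}(\Delta_0)\subseteq\Delta_0$; nothing keeps $g^{q}$ from mapping parts of the Siegel disk outside it, so $g^{q}|_{\Delta_0}$ is not a priori a self-map and cannot be iterated. A partial repair is to work on the closed disk $\overline D$ bounded by the \emph{outermost} invariant circle $C\subset K_0$: since $g^{q}(C)$ is a connected subset of $K_0$ it lies on one invariant circle $C'$, and elementary topology (the polynomial image of the bounded Jordan domain $D$ has boundary contained in $g^{q}(C)=C'$) gives $g^{q}(\overline D)\subseteq\overline{D'}\subseteq\overline D$. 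Even so, your Denjoy--Wolff step does not close the argument. When $C'=C$, the map $g^{q}\colon D\to D$ is proper of some degree $d$, hence (in the linearising coordinate, after Schwarz reflection) a Blaschke product $B$; Denjoy--Wolff only drives orbits in the \emph{open} disk to a point, while the part $m|_C$ --- a multiple of Lebesgue on $C$ --- can perfectly well remain $g^{q}$-invariant with $d>1$ (indeed $B_*\mathrm{Leb}_{\partial\mathbb D}=\mathrm{Leb}_{\partial\mathbb D}$ for any finite Blaschke product with $B(0)=0$, e.g.\ $B(z)=z^d$). So the ``circle content of $m$'' is not contradicted; your barycentre argument then pins the centre but not the degree, and you do not obtain commutation of $g^{q}$ with $P^{p}$.

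The paper attacks exactly this point by a different mechanism. Working on a single invariant circle $\partial U$, it conjugates the pair $(P^{n}|_{\partial U},Q^{m}|_{\partial U})$ via the linearisation $h$ to $(\lambda z,\tau z^{k})$ on $\mathbb S^{1}$, and invokes a preparatory lemma: $\langle\lambda z, z^{k}\rangle$ is a semidirect product of cyclic semigroups, hence right amenable. Since restriction of polynomials to an infinite set is injective, $\langle P^{n},Q^{m}\rangle$ is right amenable as an abstract semigroup, and the amenability machinery already in place (non-exceptional right amenable $\Rightarrow$ common measure of maximal entropy) forces $F(P)=F(Q)$. If $k>1$, the map $\tau z^{k}$ has a repelling fixed point on $\mathbb S^{1}$, which pulls back through the locally conformal $h$ to a repelling fixed point of $Q^{m}$ lying on $\partial U\subset F(P)=F(Q)$ --- impossible. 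Thus the higher-degree case is excluded via right amenability and the equality of Julia sets, not via hyperbolic contraction on the disk. Step~3 (Ritt/Eremenko and amenability) is then the same in both approaches.
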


Thus, according to the theorem above, the support of any finite non-atomic  invariant measure for a non-amenable semigroup $S$ must be contained in the intersection of the Julia sets of all the elements of $S$.  

A rich source of  examples of  invariant probability measures for a rational map $R$, appart from the measure of maximal entropy, is given when the Julia set of $R$ contains a homeomorphic copy of the Julia set of another rational map.  For instance, a harmonic measure supported on the boundary of a common periodic component.

\begin{definition}
A rational map $R$ admits a \textit{marked  polynomial-like measure} $(\mu,z_0)$  whenever:
\begin{itemize}

\item There exists a pair of topological discs $D_0$ and $D_1$ such that $D_0\Subset D_1$ and  the restriction of $R$ on $D_0$ defines a polynomial-like map $T:D_0\rightarrow D_1$  with connected Julia set $J(T)$  and $\mu$ on $D_0$ is the respective copy of  the measure of maximal entropy for the unique polynomial in the hybrid class of $T$. 

\item The point $z_0\in J(T)$ is periodic.
\end{itemize}
\end{definition}

For definitions and further information on polynomial-like maps see for example \cite{DouadyHubbardPolike}.

\begin{theorem}\label{th.polynomiallike}
 Let  $Q$ and $R$ be rational maps so that $Q^n$ and $R^m$ admit the same marked polynomial-like measure for suitable $m,n\geq 1$, then $Q$ and $R$ generate a right amenable semigroup. Moreover, if either $Q$ or $R$ is non-exceptional, then $\langle Q, R \rangle$ is amenable. 
\end{theorem}

According to theorem 30 in \cite{CMAmenability} every pair of non-exceptional rational maps generating an  amenable semigroup $S$ with $RIM(S)=LIM(S)$ share a common iteration.  Furthermore, this pair may share infinitely many common non-atomic invariant measures.

Therefore  the following stronger version of Open question \ref{conjetura1}  seems a natural generalization of Furstenberg's $\times 2 \times 3$ problem for non-exceptional semigroups of rational maps.
 
\begin{definition} 
A non injective rational map $R$ is called  \textit{prime} if whenever $R=G\circ T$, then either $G$ or $T$ is an injective map.
 \end{definition}

\begin{conjecture}\label{conj.Furstenration}
 Let $Q$ and $R$ be two non injective prime rational maps without common iteration and such that $Q$ and $R$ admit a common invariant probability measure $\mu$. Is it true that $\mu$ is either discrete or $\mu$ is the measure of maximal entropy of every element of the semigroup $\langle Q, R \rangle$?
 \end{conjecture}

Next theorem answers our generalization of Furstenberg $\times 2 \times 3$ problem  for a right amenable semigroup of non exceptional polynomials without pairwise common iteration.

\begin{definition}
 Let $Deck(R)=\{\gamma\in Mob | R\circ \gamma=R\}$ be the deck group of the rational map $R$. We call a map $R$ \textit{primitive} if $Deck(R)$ is a cyclic group of order $deg(R)$.
\end{definition}

\begin{theorem}\label{th.free}
Let $S$ be a right-amenable  semigroup of polynomials.  Let $P,Q\in S$ be two non-injective polynomials not sharing a common iteration and such that $P$ is primitive and non-exceptional. Assume that $S$ admits an invariant probability measure $\mu$, then  
 $\mu$ is either discrete or  the measure of maximal entropy of $P$.
\end{theorem}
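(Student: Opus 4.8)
The plan is to leverage Theorem \ref{th.three}: since $S$ is a non-exceptional semigroup of polynomials and is right amenable, and (invoking \cite{PakovichAmenRat}) in the polynomial setting right amenability interacts with the mean structure, I would first want to reduce to the situation where $S$ is genuinely amenable with $RIM(S)=LIM(S)$ — this is the hypothesis under which theorem 30 of \cite{CMAmenability} forces common iterations. However $P$ and $Q$ are assumed \emph{not} to share a common iteration, so that route must fail; the correct strategy is therefore to use right amenability to extract structural information about $\mu$ directly. Concretely, right amenability of $S$ gives a right invariant mean, which by the main theorem quoted from \cite{CMAmenability} (the boxed statement after the reference to \cite{CMAmenability}) produces an $S$-invariant probability measure $\nu$ that is the measure of maximal entropy for every non-injective element of $S$; in particular $\nu$ is the measure of maximal entropy $\mu_P$ of $P$. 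So the content is to show that the \emph{given} $\mu$ is either discrete or equals this $\nu=\mu_P$.

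The heart of the argument should be an analysis using primality and primitivity of $P$. First I would decompose $\mu$ into ergodic components; since the statement is about $\mu$ being discrete or equal to $\mu_P$, and $\mu_P$ is ergodic for $P$, it suffices to treat an ergodic $S$-invariant $\mu$. Next, consider the support $K=supp(\mu)$, a compact $S$-invariant (hence $P$-invariant and $Q$-invariant) set. If $K\subset F(P)$, then Theorem \ref{th.Theorem5} applies and forces $\mu$ discrete (the other alternative of that theorem, that every element of $S$ shares a common iterate with $P$, contradicts the hypothesis on $Q$). So I may assume $K\cap J(P)\ne\emptyset$, and since $J(P)$ is $P$-invariant and $\mu$ is $P$-ergodic, in fact $K\subseteq J(P)$ and similarly $K\subseteq J(Q)$. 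Now I would bring in primitivity: $Deck(P)$ cyclic of order $\deg P$ means $P$ is, up to M\"obius conjugacy, "as far from having extra symmetry as a non-exceptional map can be", and combined with primality ($P$ indecomposable) this should pin down the possible $P$-invariant measures supported on $J(P)$ that are also invariant under a second polynomial $Q$ with $J(Q)\supseteq K$. The mechanism I expect: a common invariant measure on $J(P)\cap J(Q)$ with $P,Q$ non-injective polynomials, together with rigidity results (in the spirit of \cite{LevinPrytyckiRel}, \cite{Ye}, \cite{Tucker}) saying that sharing "enough" invariant structure forces either a common iterate or the measure of maximal entropy, should leave only $\mu=\mu_P$; primitivity is what rules out the intermediate cases (copies of lower-degree dynamics, non-primitive symmetric obstructions) and primality rules out $P$ factoring through a common right factor as in the $\langle -z^2-\lambda z^4,\,z^2+\lambda z^4\rangle$ example discussed above.

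The main obstacle, I expect, is exactly this last rigidity step: ruling out that $\mu$ is a non-atomic invariant measure strictly between "discrete" and "measure of maximal entropy" — e.g. a harmonic measure on the boundary of a shared invariant Fatou component, or a copy of a polynomial-like measure as in Theorem \ref{th.polynomiallike}. For polynomials with $K\subseteq J(P)\cap J(Q)$ one must show no such intermediate invariant measure survives being simultaneously $P$- and $Q$-invariant once $P$ is primitive, non-exceptional, and shares no iterate with $Q$. I would attack this by pushing the measure-rigidity: if $\mu\ne\mu_P$, build from $\mu$ (via its conditional/Jacobian structure under $P$ and under $Q$, using that both are polynomials so $\infty$ is a superattracting common fixed point and the Böttcher/Green's function machinery is available) a functional equation relating $P$ and $Q$ on a neighbourhood of $K$; primitivity forces the deck-group normalization, and then the only solutions are $P,Q$ with a common iterate (excluded) or $\mu$ charging the Green measure, i.e. $\mu=\mu_P=\mu_Q$. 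If a fully general argument is elusive, one can at least close the gap by combining Theorem \ref{th.Theorem5} (handling the Fatou case) with the amenability-driven existence of $\nu=\mu_P$ and a uniqueness statement for $S$-invariant measures on $J(P)$ under the primitivity hypothesis.
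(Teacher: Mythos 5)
Your opening step is the same as the paper's: right amenability of $S$ plus theorem 4 of \cite{CMAmenability} gives that the non-injective elements, in particular $P$ and $Q$, share the measure of maximal entropy. But everything after that is a genuine gap, and you say so yourself: the ``rigidity step'' that is supposed to exclude intermediate invariant measures is never carried out, only described as something one would ``attack'' via Green's functions and a functional equation. The paper closes this gap with a short, concrete mechanism (Proposition \ref{prop.FustPrimRat}) that your sketch never finds: since $P$ is primitive and non-exceptional, $P$ is affinely conjugate to $z^n+c$ with $c\neq 0$, so $E(P)$ is generated by $Deck(P)$ and $\langle P\rangle$; because $\mu_P=\mu_Q$ and $P,Q$ share no iterate, necessarily $Q=\gamma\circ P^j$ with $\gamma\in Deck(P)$, $\gamma\neq Id$. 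Then $\mu$, being invariant for both $P$ and $Q$, is invariant under $\gamma$ and hence under $Deck(P)$; and since primitivity means exactly that $Deck(P)$ acts transitively on the fibers $P^{-1}(z)$, averaging over $Deck(P)$ shows $\int L_P(\phi)\,d\mu=\int\phi\,d\mu$, i.e.\ $L_P^*\mu=\mu$, and Lyubich's uniqueness theorem for balanced measures yields the dichotomy. So the real role of primitivity is fiber-transitivity of the deck group, which converts deck-invariance of $\mu$ into invariance under the dual Lyubich operator --- not the vague ``absence of extra symmetry'' you invoke.

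Two further problems with your reduction steps. You conflate ``prime'' (indecomposable) with ``primitive'': primality is neither a hypothesis of the theorem nor needed, and your appeal to it to rule out common right factors is off target. And the reduction ``it suffices to treat an ergodic $\mu$'' is not justified: ergodic components of a measure invariant under a semigroup action need not be invariant for the whole semigroup, and the conclusion ``discrete or equal to $\mu_P$'' is not stable under taking mixtures, so you cannot simply decompose and recombine. The paper's averaging argument needs no ergodicity at all, which is another reason the deck-group computation, rather than measure rigidity of the type in \cite{LevinPrytyckiRel}, \cite{Ye}, \cite{Tucker}, is the right tool here.
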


\textbf{Acknowledgement.} The authors would like to thank the referee for useful comments and careful reading of a previous version of this paper. 

\section{Some background in holomorphic dynamics and amenability of semigroups}

Let us recall some basic concepts on holomorphic dynamics and amenability of semigroups. 
\subsection{Amenability of semigroups}

Every semigroup $S$ canonically defines a right and left action of $S$ onto itself. A
semigroup is called \textit{right (left) amenable} if the right (left) action of $S$ onto itself admits 
an invariant, finitely additive, probability measure $\mu$. This measure defines a \textit{mean}, which is a normalized positive right (left) invariant continuous functional on $L_\infty(S)$.  If $RIM(S)\cap LIM(S) \neq \emptyset$,  $S$ is called  an \textit{amenable semigroup}.

Recall some basic properties of amenable semigroups, see for example \cite{DayAmenable}.

\begin{itemize}
 \item Every abelian semigroup is amenable.
 \item Every semigroup is a subsemigroup of an amenable semigroup.
 \item Every subgroup of an amenable group is amenable. 
\end{itemize}

In this article, we use a weaker property than amenability, namely $\rho$-amenabi-lity based on  \cite{DayMeans}.

First, we say that a proper right (left) $S$-invariant  subspace $X\subset L_\infty(S)$ is called either \textit{right} or \textit{left amenable} if $X$ contains constant functions and there exists a mean $M$ so that the restriction of $M$ to $X$ is an invariant functional for either  the right or left actions of $S$ on $X$, respectively. In other words, $X$ is right (left) $S$-invariant and admits a respective invariant non-negative state. Note that every semigroup admits an amenable subspace, for example,   the subspace  of constant functions is always amenable. 

Now, let $\rho$ be a bounded (anti-)homomorphism from $S$ into $End(B)$, where $B$ is a Banach space, and $End(B)$ is the semigroup of continuous linear endomorphisms of $B$. Let $B^*$ denote the dual space of $B$. Given a pair 
$(b,b^*)\in B\times B^*$ consider the function $f_{(b,b^*)}\in L_\infty(S)$ given by
$$f_{(b,b^*)}(s)=b^*(\rho(s)(b))).$$  Let $Y_\rho\subset L_\infty(S)$, be the closure of the linear span of the family of functions $\{ f_{(b, b^*)} \}$ for all pairs $(b,b^*)\in B\times B^*.$ Finally let $X_\rho$ be the space generated by $Y_\rho$ and the constant functions. Note that $X_\rho$ and $Y_\rho$ are both right and left invariant. 

\begin{definition}
We will say that a bounded (anti-)homomorphism $\rho$ is either \textit{right amenable or left amenable} whenever $X_\rho$ is either a right or left amenable subspace of $L_\infty(S)$, respectively.  Also we will say that $S$ is $\rho$ right amenable or $\rho$ left amenable whenever $\rho$ has the respective property. Equivalently, that the $\rho$-action of $S$ on $B$ is either right amenable or left amenable, respectively.
\end{definition}

To show that, in general, amenability is stronger than $\rho$-amenability let us recall Day's theorem from \cite{DayMeans}.

\begin{theorem}\label{th.Day} A  semigroup $S$ is right amenable (left amenable) if and only if $S$ is $\rho$-right amenable ($\rho$-left amenable) for every bounded (anti)representation $\rho$.
 
\end{theorem}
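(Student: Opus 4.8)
The plan is to read both implications off from the definitions, the only genuine work being the choice of a single ``universal'' representation that detects amenability in the converse.

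\emph{From amenability to $\rho$-amenability.} First I would fix a right invariant mean $M_0$ on $L_\infty(S)$ and let $\rho\colon S\to End(B)$ be an arbitrary bounded representation. By construction the subspace $X_\rho\subset L_\infty(S)$ is right $S$-invariant and contains the constant functions, and since $M_0$ is right invariant on all of $L_\infty(S)$ it is in particular invariant under the right action of $S$ restricted to $X_\rho$. Hence $X_\rho$ is a right amenable subspace, i.e.\ $\rho$ is right amenable, and since $\rho$ was arbitrary this is the ``only if'' direction. The left amenable case is identical, replacing ``right'' by ``left'' throughout.

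\emph{From $\rho$-amenability to amenability.} For $t\in S$ write $R_t,L_t\colon L_\infty(S)\to L_\infty(S)$ for the translations $(R_tf)(s)=f(st)$ and $(L_tf)(s)=f(ts)$. One checks immediately that $R_{t_1}R_{t_2}=R_{t_1t_2}$, that $\|R_t\|\le 1$, and that left and right translations commute, $L_uR_t=R_tL_u$, the last identity being nothing but associativity, $u(st)=(us)t$. Thus $\rho_0(t):=R_t$ is a bounded representation of $S$ on $B:=L_\infty(S)$, and applying the hypothesis to $\rho_0$ produces a mean $M$ on $L_\infty(S)$ with $M(R_th)=M(h)$ for every $h\in X_{\rho_0}$ and every $t\in S$. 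Next I would identify enough of $X_{\rho_0}$ explicitly: taking $b=g\in L_\infty(S)$ and $b^*=\delta_{s_0}$, the evaluation functional at a point $s_0\in S$ (a norm-one element of $L_\infty(S)^*$), one gets
\[
f_{(g,\delta_{s_0})}(t)=\delta_{s_0}(R_tg)=g(s_0t)=(L_{s_0}g)(t),
\]
so that $X_{\rho_0}$ contains every left translate $L_{s_0}g$ with $g\in L_\infty(S)$ and $s_0\in S$. Now fix one $s_0\in S$ and define $\nu(g):=M(L_{s_0}g)$ for $g\in L_\infty(S)$. Then $\nu$ is a mean, since $\nu(1)=M(1)=1$ and $\nu\ge 0$, and because $L_{s_0}g\in X_{\rho_0}$ and $L_{s_0}R_t=R_tL_{s_0}$ we get
\[
\nu(R_tg)=M(L_{s_0}(R_tg))=M(R_t(L_{s_0}g))=M(L_{s_0}g)=\nu(g).
\]
Thus $\nu$ is a right invariant mean on $L_\infty(S)$, so $S$ is right amenable. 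The left amenable statement follows by running the same argument with the bounded anti-representation $\rho_0(t):=L_t$ and the roles of $L$ and $R$ interchanged.

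\emph{Where the difficulty sits.} Essentially all of the content is in the converse, and in particular in recognising that the regular representation $\rho_0(t)=R_t$ is the correct test object even though its space $X_{\rho_0}$ is in general only the closed span of the left translates together with the constants, hence possibly a proper subspace of $L_\infty(S)$: the hypothesis then only gives invariance on $X_{\rho_0}$, and it is the averaging step $g\mapsto M(L_{s_0}g)$ --- legitimate precisely because left and right translations commute --- that upgrades this to genuine right invariance on all of $L_\infty(S)$. The single minor technical point still to be dispatched is the degenerate case $X_{\rho_0}=L_\infty(S)$ (which occurs, for instance, whenever $S$ has a left identity), in which $X_{\rho_0}$ fails to be a proper subspace; but then the mean furnished by the hypothesis is already right invariant on the whole of $L_\infty(S)$, and there is nothing to prove.
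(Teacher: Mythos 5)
Your argument is correct, but note that the paper itself offers no proof of this statement: it is recalled verbatim from Day's paper \cite{DayMeans}, so there is nothing internal to compare against, and your write-up effectively reconstructs the standard argument. The forward direction is indeed just restriction of a global invariant mean to $X_\rho$. The substance is exactly where you locate it: testing the hypothesis against the right regular representation $\rho_0(t)=R_t$ on $B=L_\infty(S)$, identifying the coefficient functions $f_{(g,\delta_{s_0})}=L_{s_0}g$, and then upgrading invariance on $X_{\rho_0}$ to invariance on all of $L_\infty(S)$ by the averaging $\nu(g)=M(L_{s_0}g)$, which works precisely because $L_{s_0}R_t=R_tL_{s_0}$ and $L_{s_0}g\in X_{\rho_0}$; all of these verifications check out, and $\nu$ is visibly a mean. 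Two small remarks. First, your left-handed case uses the anti-representation $\rho_0(t)=L_t$; this is legitimate under the paper's convention that $\rho$ may be a bounded (anti-)homomorphism, but if one insisted on honest homomorphisms one could instead use $t\mapsto L_t^*$ acting on $L_\infty(S)^*$, whose coefficient functions (with $b=\delta_{s_0}$ and $b^*$ the canonical image of $g$) are the right translates $R_{s_0}g$, and the same commutation trick applies. Second, the paper's definition of an amenable subspace is stated for \emph{proper} invariant subspaces, so your degenerate case $X_{\rho_0}=L_\infty(S)$ is really a quirk of that wording rather than a gap in your proof; your reading (invariance on all of $L_\infty(S)$, hence nothing to prove) is the intended one, and in any case your main argument never uses properness.
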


A semigroup $S$ satisfies the right (left) F\o{}lner condition (FC) if for every finite set $H$ and every $\epsilon>0$ there exists a finite set $F$ such that $card(Fs\setminus F)<\epsilon\,card(F)$ for every $s\in H$ (respectively, $card(sF\setminus F)<\epsilon\, card(F)$ for every $s\in H$).

A semigroup $S$ satisfies the right (left) strong F\o{}lner condition (SFC) if for every finite set $H$ and every $\epsilon>0$ there exists a finite set $F$ such that $card(F\setminus Fs)<\epsilon\,card(F)$ for every $s\in H$ (respectively, $card(F\setminus sF)<\epsilon\, card(F)$ for every $s\in H$).

The following theorem summarizes Theorem 2.2 in \cite{Klawe} (see also \cite{Gray}).

\begin{theorem}\label{th.Folner} For every semigroup $S$,
$$\textnormal{ left SFC } \Rightarrow \textnormal{ left amenability } \Rightarrow \textnormal{ left FC}.$$

None of the implications above are reversible. However, these implications are reversible for left cancellative semigroups. 
 
\end{theorem}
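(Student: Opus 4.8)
The plan is to prove the two implications directly, then observe that they collapse to an equivalence under left cancellativity, and finally to import the non-reversibility examples from \cite{Klawe}. Both implications are one-sided, non-cancellative versions of the classical F\o{}lner/Namioka circle of ideas.

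For $\mathrm{left\ SFC}\Rightarrow\mathrm{left\ amenability}$ I would use the counting-measure averaging argument. For each pair $(H,\epsilon)$ with $H\subset S$ finite and $\epsilon>0$ pick a strong F\o{}lner set $F=F_{H,\epsilon}$, so $\mathrm{card}(F\setminus sF)<\epsilon\,\mathrm{card}(F)$ for all $s\in H$, and consider the normalized counting functional $\mu_F(f)=\mathrm{card}(F)^{-1}\sum_{x\in F}f(x)$ on $L_\infty(S)$. Writing the left translation as $(L_sf)(t)=f(st)$ and grouping $\sum_{x\in F}f(sx)$ by the value of $sx$, one checks that the total mass displaced equals $\mathrm{card}(F\setminus sF)$, even though $\lambda_s\colon t\mapsto st$ need not be injective on $F$; hence $|\mu_F(L_sf)-\mu_F(f)|\le 2\|f\|_\infty\,\mathrm{card}(F\setminus sF)/\mathrm{card}(F)<2\epsilon\|f\|_\infty$ for $s\in H$. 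Any weak-$*$ cluster point of the net $(\mu_{F_{H,\epsilon}})$, directed by enlarging $H$ and shrinking $\epsilon$, is then a left invariant mean.

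For $\mathrm{left\ amenability}\Rightarrow\mathrm{left\ FC}$ I would adapt Namioka's argument. From a left invariant mean $m$, use weak-$*$ density of the $\ell^1$-probability densities in the set of means to get a net $\phi_\alpha\in\ell^1(S)$, $\phi_\alpha\ge 0$, $\|\phi_\alpha\|_1=1$, with $\phi_\alpha\to m$; left invariance of $m$ forces $(L_s)_*\phi_\alpha-\phi_\alpha\to 0$ weakly in $\ell^1$, where $((L_s)_*\phi)(u)=\sum_{sv=u}\phi(v)$. Since the norm and weak closures of the convex set $\{((L_s)_*\phi-\phi)_{s\in H}:\phi\ge 0,\ \|\phi\|_1=1\}$ coincide, for every finite $H$ and $\epsilon>0$ there is such a $\phi$ with $\sum_{s\in H}\|(L_s)_*\phi-\phi\|_1<\epsilon$. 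Then I would use the layer-cake decomposition $\phi=\int_0^\infty\mathbf{1}_{F_t}\,dt$ with $F_t=\{\phi>t\}$ finite and $\int_0^\infty\mathrm{card}(F_t)\,dt=1$, together with the identity $\int_0^\infty\mathrm{card}(sF_t\setminus F_t)\,dt=\sum_u(\psi_s(u)-\phi(u))^+$, where $\psi_s(u)=\max_{sv=u}\phi(v)$. The inequality $\psi_s(u)\le((L_s)_*\phi)(u)$ gives $\int_0^\infty\mathrm{card}(sF_t\setminus F_t)\,dt\le\|(L_s)_*\phi-\phi\|_1$, so summing over $H$ and comparing with $\int_0^\infty\mathrm{card}(F_t)\,dt=1$, a pigeonhole in the level parameter $t$ produces a single level set $F=F_t$ with $\mathrm{card}(sF\setminus F)<\epsilon\,\mathrm{card}(F)$ for all $s\in H$. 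The delicate point, and the step I expect to require the most care, is precisely that $\lambda_s$ may be non-injective: the genuine pushforward $(L_s)_*\mathbf{1}_{F_t}$ piles mass up and is no longer an indicator, and this is exactly what the bound $\max_{sv=u}\phi(v)\le\sum_{sv=u}\phi(v)$ is used to absorb.

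Finally, when $S$ is left cancellative every $\lambda_s$ is injective, so $\mathrm{card}(sF)=\mathrm{card}(F)$ and therefore $\mathrm{card}(sF\setminus F)=\mathrm{card}(F\setminus sF)$ for every finite $F$ and $s\in S$; thus left FC and left SFC become literally the same condition, and with the two implications above all three properties are equivalent. For the general case I would not try to reconstruct counterexamples: the strictness of both implications is witnessed by the semigroups built in \cite{Klawe} (a left amenable one admitting no strong F\o{}lner sets, and one satisfying the ordinary F\o{}lner condition but not left amenable).
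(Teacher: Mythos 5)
Your two implications are proved correctly, but note that the paper does not prove this statement at all: it is stated as a summary of Theorem 2.2 in \cite{Klawe} (see also \cite{Gray}), so your proposal replaces a citation with an actual argument. What you give is essentially the classical route: the first implication is the Argabright--Wilde counting-mean argument, and your bound $|\mu_F(L_sf)-\mu_F(f)|\le 2\|f\|_\infty\,\mathrm{card}(F\setminus sF)/\mathrm{card}(F)$ does hold -- the cleanest way to see it is to match, for each $u\in F\cap sF$, one preimage copy of $u$ in the multiset $\{sx:x\in F\}$ with $u\in F$ itself, leaving exactly $\mathrm{card}(F)-\mathrm{card}(F\cap sF)=\mathrm{card}(F\setminus sF)$ unmatched items on each side (here one also uses $\mathrm{card}(sF)\le\mathrm{card}(F)$); a weak-$*$ cluster point of the resulting net is then a left invariant mean. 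The second implication is the Day--Namioka argument, and your level-set identity $\int_0^\infty\mathrm{card}(sF_t\setminus F_t)\,dt=\sum_u(\psi_s(u)-\phi(u))^+$ with $\psi_s(u)=\max_{sv=u}\phi(v)$ is exactly the right way to handle non-injectivity of $t\mapsto st$, since $\psi_s\le(L_s)_*\phi$ pointwise and the pigeonhole over the level parameter then yields a F\o{}lner set. Your observation that left cancellativity makes $\mathrm{card}(sF\setminus F)=\mathrm{card}(F\setminus sF)$, so that FC and SFC are literally the same condition and the chain closes into an equivalence, is also correct and is the real content behind the reversibility claim. The only part you do not prove -- strictness of both implications -- you defer to \cite{Klawe}, which is precisely what the paper itself does, so nothing is lost there; for the record, a two-element left zero semigroup already shows left FC does not imply left amenability, while the failure of left amenability to imply left SFC requires Klawe's more elaborate examples. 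In short: the paper buys brevity by citing background; your version buys self-containedness and makes visible why cancellativity is the exact hypothesis that reverses the arrows.
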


Let $U$ and $T$ be  semigroups with a homomorphism $\varphi:T\rightarrow End(U).$
Let  $U\rtimes_\varphi T$ be the semidirect product with multiplication
$$(u_1,t_1)\cdot (u_2,t_2)=(u_1\cdot \varphi(t_1)(u_2),t_1t_2).$$ 

Next theorem gathers two results of M. Klawe (see \cite{Klawe}).

\begin{theorem}\label{th.Klawe} Let $U$ and $T$ be semigroups and $\varphi:T\rightarrow End(U)$ be a homomorphism.
\begin{itemize}
 \item If $U$ and $T$ are right amenable, then $U\rtimes_\varphi T$ is right amenable.
\item If $U$ and $T$ are amenable semigroups and  $\varphi(t)$ is a surjective map for every $t\in T$, then $U\rtimes_\varphi T$ is amenable.
\end{itemize}

\end{theorem}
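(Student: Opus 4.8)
The plan is to construct invariant means on $U\rtimes_\varphi T$ by iterating invariant means on the two factors. Write $(r_af)(x)=f(xa)$ and $(\ell_af)(x)=f(ax)$ for the translates of $f\in L_\infty$. Given a right invariant mean $M_U$ on $L_\infty(U)$ and a right invariant mean $M_T$ on $L_\infty(T)$, define for $f\in L_\infty(U\rtimes_\varphi T)$
$$M(f)=M_T\!\left(\,t\mapsto M_U\!\left(u\mapsto f(u,t)\right)\,\right).$$
The function $t\mapsto M_U(u\mapsto f(u,t))$ is bounded, so this is well defined, and positivity and $M(1)=1$ are inherited from $M_U,M_T$; thus $M$ is a mean. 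For the first statement I would compute a right translate by $a=(v,s)$. Since $(u,t)\cdot(v,s)=(u\,\varphi(t)(v),ts)$, the inner average becomes $M_U\big(u\mapsto f(u\,\varphi(t)(v),ts)\big)$, which for each fixed $t$ is the $M_U$-average of a right translate in $U$ by the element $\varphi(t)(v)\in U$; hence right invariance of $M_U$ collapses it to $h(ts)$, where $h(t):=M_U(u\mapsto f(u,t))$, and then $M(r_af)=M_T(t\mapsto h(ts))=M_T(h)=M(f)$ by right invariance of $M_T$. This step uses only that $\varphi(t)(v)$ is an element of $U$, not surjectivity, which settles the first bullet.

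For the second statement the same $M$ must in addition be left invariant, and the obstruction is visible in $(v,s)\cdot(u,t)=(v\,\varphi(s)(u),st)$: the first averaging produces $M_U\big(u\mapsto f(v\,\varphi(s)(u),st)\big)$, where the inner dependence on $u$ runs through the endomorphism $\varphi(s)$ rather than a left translation, so ordinary left invariance of $M_U$ is not enough. The key step is to produce a single mean $M_U$ on $L_\infty(U)$ that is simultaneously left invariant, right invariant and $\varphi(t)$-invariant for every $t\in T$, in the sense that $M_U(g\circ\varphi(t))=M_U(g)$. To get it, let $K\subset L_\infty(U)^*$ be the set of two-sided invariant means on $U$: it is nonempty because $U$ is amenable, and it is convex and weak${}^*$-compact. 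For each $t$ the pullback $M\mapsto M(\,\cdot\,\circ\varphi(t))$ is a weak${}^*$-continuous affine self-map of $K$, and here surjectivity of $\varphi(t)$ is exactly what is needed in order to verify that the pullback of a two-sided invariant mean is again two-sided invariant — if $a=\varphi(t)(b)$ then $g\big(a\,\varphi(t)(u)\big)=g\big(\varphi(t)(bu)\big)=(g\circ\varphi(t))(bu)$, and symmetrically on the right. These maps constitute an action of $T$ on $K$, so Day's fixed point theorem, applied to the amenable semigroup $T$, yields a common fixed point $M_U\in K$, which is the mean we want.

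With this $M_U$ and any two-sided invariant mean $M_T$ on $L_\infty(T)$, form $M$ by the same formula. Right invariance of $M$ follows exactly as in the first statement. For left invariance, a left translate by $(v,s)$ turns the inner average into $M_U\big(u\mapsto f(v\,\varphi(s)(u),st)\big)$; writing $g(w):=f(w,st)$ this equals $M_U\big((\ell_v g)\circ\varphi(s)\big)=M_U(\ell_v g)=M_U(g)=h(st)$, using first the $\varphi(s)$-invariance and then the left invariance of $M_U$, with $h(t):=M_U(w\mapsto f(w,t))$; hence $M(\ell_{(v,s)}f)=M_T(t\mapsto h(st))=M_T(h)=M(f)$. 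So $M$ is two-sided invariant and $U\rtimes_\varphi T$ is amenable. The main obstacle is the construction in the preceding paragraph: one must recognize that surjectivity of the $\varphi(t)$ is precisely the hypothesis that makes the $T$-action on the invariant means of $U$ well defined, and then extract a common fixed point via Day's theorem; a minor point is matching the handedness of that action with the hypotheses of the fixed point theorem, which causes no trouble because $T$ is amenable on both sides rather than merely one-sided.
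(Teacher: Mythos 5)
Your proof is correct, but note that the paper does not actually prove this statement: it simply cites Proposition 3.10 and Corollary 3.11 of Klawe, so any complete argument is necessarily ``different from the paper'' in the sense that the paper only records a reference. Your iterated-mean construction $M(f)=M_T\bigl(t\mapsto M_U(u\mapsto f(u,t))\bigr)$ is the standard device and settles the right-amenable case exactly as in the literature; the substantive contribution is your handling of the second bullet, where you correctly identify that surjectivity of each $\varphi(t)$ is precisely what makes the pullback $\Phi_t\colon M\mapsto M(\cdot\circ\varphi(t))$ preserve the nonempty, convex, weak$^*$-compact set $K$ of two-sided invariant means on $U$, and then extract a common fixed point via Day's fixed point theorem. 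That step is sound: each $\Phi_t$ is affine and weak$^*$-continuous, and $\Phi_{t_1t_2}=\Phi_{t_1}\circ\Phi_{t_2}$ because $\varphi$ is a homomorphism, so a left invariant mean on the amenable semigroup $T$ is all the fixed point theorem needs, and the handedness caveat you mention is indeed harmless. If you prefer to avoid quoting the fixed point theorem, the same $\varphi$-invariant two-sided mean can be written down directly: take a left invariant mean $N$ on $T$ and a two-sided invariant mean $M_U$ on $U$ and set $M_U'(g)=N\bigl(t\mapsto M_U(g\circ\varphi(t))\bigr)$; surjectivity shows each $M_U(\cdot\circ\varphi(t))$ is again two-sided invariant, hence so is $M_U'$, while left invariance of $N$ gives $M_U'(g\circ\varphi(s))=M_U'(g)$, after which your final computation of two-sided invariance of $M$ goes through verbatim.
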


\begin{proof}
These appear as  Proposition 3.10  and Corollary 3.11 in \cite{Klawe}.
\end{proof}

\subsection{Dynamics of semigroups of rational maps}

We assume that the reader is familiar with basic notions of holomorphic dynamics, such as the Julia set, the Fatou set (see for example \cite{Mdyn}) and polynomial-like maps (see \cite{DouadyHubbardPolike}).

An important notion in holomorphic dynamics is that of the measure of maximal entropy. According to Brolin \cite{Brolin} (polynomial case),  Lyubich \cite{LyubichErgodicTheory} and, independently, by Friere, Lopez and Ma\~n\'e \cite{FreireLopesMane}  (rational case) every non injective rational map $R$ has a unique invariant probability measure $\mu_R$ of maximal entropy supported in the whole Julia set $J(R)$.

Due to results in \cite{LevinPrytyckiRel}, \cite{Levinrelations}  and \cite{Ye},  we have the following  characterization under which two non-exceptional rational maps share the same measure of maximal entropy. 

\begin{theorem}\label{th.Levinexc}
Two non-exceptional rational maps $Q$ and $R$ share the same measure of maximal entropy if and only if there are iterations of  $Q$ and $R$ which satisfy the relations $$R^n\circ Q^m= R^{2n}$$ and $$Q^m\circ R^n=Q^{2m}.$$

\end{theorem}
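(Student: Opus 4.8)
The plan is to split the equivalence into an easy implication, handled by a direct computation with the pullback operator on measures, and the substantial one, which I would extract from the rigidity of non-exceptional maps sharing a measure of maximal entropy, i.e.\ from \cite{LevinPrytyckiRel}, \cite{Levinrelations} and \cite{Ye}.

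\emph{Sufficiency.} Assume $R^n\circ Q^m=R^{2n}$ for some $n,m\ge 1$ (in fact just one of the two Levin relations already suffices). Put $d=\deg R$, $e=\deg Q$, and recall the two standard facts I need about measures of maximal entropy: $\mu_{R^n}=\mu_R$, and $(R^n)^*\mu_R=d^{\,n}\mu_R$ (the defining property of the measure of maximal entropy of $R^n$, whose degree is $d^{\,n}$); moreover $\mu_R$ is non-atomic, hence assigns zero mass to the (at most two-point) exceptional set of any rational map. Since $f\mapsto f^*$ is contravariant, pulling back the identity $R^n\circ Q^m=R^n\circ R^n$ and applying the resulting operator identity to $\mu_R$ gives $(Q^m)^*\big((R^n)^*\mu_R\big)=(R^n)^*\big((R^n)^*\mu_R\big)$, i.e.\ $d^{\,n}(Q^m)^*\mu_R=d^{\,2n}\mu_R$, so $(Q^m)^*\mu_R=d^{\,n}\mu_R$. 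Comparing total masses forces $e^{\,m}=d^{\,n}=:D$, whence $\tfrac1D(Q^m)^*\mu_R=\mu_R$. As $\mu_R$ does not charge the exceptional set of $Q^m$, the equidistribution theorem (\cite{Brolin}, \cite{LyubichErgodicTheory}, \cite{FreireLopesMane}) identifies $\mu_R$ with the unique fixed point of $\tfrac1D(Q^m)^*$ among probability measures not charging the exceptional set of $Q^m$, namely $\mu_{Q^m}=\mu_Q$; hence $\mu_R=\mu_Q$.

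\emph{Necessity.} Now assume $\mu:=\mu_R=\mu_Q$. Then $J(R)=\operatorname{supp}\mu=J(Q)=:J$, and since $(f\circ g)^*\mu=\deg(f)\deg(g)\,\mu$ whenever $\mu_f=\mu_g=\mu$, every word in $R$ and $Q$ has $\mu$ as its measure of maximal entropy, so $\langle R,Q\rangle\subset E(R)$. The plan is to invoke the classification of non-exceptional maps with a common measure of maximal entropy: the M\"obius transformations preserving $J$ and $\mu$ form a \emph{finite} group, $R$ interacts with it through a finite-order action, and --- after passing to iterates to equalize the degrees (which in particular forces $\deg R$ and $\deg Q$ to be multiplicatively dependent, an assertion that genuinely uses non-exceptionality, as the exceptional pair $z^2,z^3$ shows) --- one obtains either a common iterate of $R$ and $Q$ or a relation of the form $Q^m=\gamma\circ R^n$ with $\gamma\in\mathrm{Deck}(R^n)$. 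In the latter case $R^n\circ Q^m=R^n\circ\gamma\circ R^n=R^{2n}$ at once, while $Q^m\circ R^n=\gamma R^n R^n=\gamma R^{2n}=(\gamma R^n)(\gamma R^n)=Q^{2m}$ using $R^n\circ\gamma=R^n$; the common-iterate case is immediate. This yields the required $n,m$.

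\emph{The main obstacle} is precisely the structural input used in that last step: the finiteness of the symmetry group of $(J,\mu)$ and the statement that, up to that finite group, every rational map with measure of maximal entropy $\mu$ is an iterate of $R$. For polynomials this is essentially \cite{LevinPrytyckiRel} and is accessible through potential theory --- $\mu$ is then harmonic measure on a common filled Julia set, whose affine symmetry group is finite exactly because the polynomial is non-exceptional (for $z^{\,k}$ and Chebyshev maps it is infinite) --- whereas in the general rational case it rests on the absence of an invariant line field on the Julia set of a non-exceptional map, which is the content of \cite{Levinrelations} and \cite{Ye}. Everything after this input is the finite-group bookkeeping indicated above.
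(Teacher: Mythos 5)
The paper never proves this statement: it appears in the background section and is simply attributed to \cite{LevinPrytyckiRel}, \cite{Levinrelations} and \cite{Ye}, so the relevant comparison is between your write-up and those references. Your sufficiency half is correct and self-contained: from $R^n\circ Q^m=R^{2n}$, contravariance of pullback and $(R^n)^*\mu_R=d^{\,n}\mu_R$ give $(Q^m)^*\mu_R=d^{\,n}\mu_R$, mass comparison forces $\deg Q^m=\deg R^n$, and the uniqueness of the balanced probability measure charging no exceptional points (\cite{Brolin}, \cite{LyubichErgodicTheory}, \cite{FreireLopesMane}) yields $\mu_R=\mu_{Q^m}=\mu_Q$; as you note, one relation and no non-exceptionality suffice here. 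That is a genuine (if easy) addition relative to the paper, which offers no argument for either direction.

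The necessity half, however, contains a gap as written. You reduce it to the dichotomy ``common iterate, or $Q^m=\gamma\circ R^n$ with $\gamma\in \mathrm{Deck}(R^n)$'', i.e.\ $R^n\circ\gamma=R^n$, and then the Levin relations do follow in one line. But that dichotomy is not what \cite{LevinPrytyckiRel}, \cite{Levinrelations} or \cite{Ye} provide, and it is a strictly stronger assertion than the relations themselves: $R^n\circ Q^m=R^{2n}$ only says $Q^m(z)$ and $R^n(z)$ lie in the same fiber of $R^n$, which does not produce a global M\"obius deck transformation (for a generic non-exceptional map $\mathrm{Deck}(R^n)$ is trivial). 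What the symmetry-group structure theory actually gives (Schmidt--Steinmetz/Levin for polynomials, and the shape of the statements in \cite{Ye}) is $Q^m=\sigma\circ R^n$ with $\sigma$ in the finite group of symmetries of $(J,\mu)$; such a $\sigma$ satisfies $\sigma(J)=J$ and $\sigma_*\mu=\mu$ but in general $R^n\circ\sigma\neq R^n$, so your computation of the first Levin relation breaks down at exactly that point (your own example $Q=-P$ with $P$ odd illustrates this: there $\sigma\notin\mathrm{Deck}(P)$ and one must instead pass to the common iterate $Q^2=P^2$, which is a separate argument). Bridging ``symmetry of $(J,\mu)$'' to the Levin relations --- and handling general rational maps, where no global decomposition exists and the proofs go through invariant line fields and the methods of Levin--Przytycki and Ye --- is precisely the content of the cited theorems. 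So your hard direction is, like the paper's, an appeal to the literature; the intermediate mechanism you interpolate is not correct as stated and should either be removed (citing the relations directly) or restated with $\sigma$ in the symmetry group together with the additional argument that produces the relations from it.
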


Moreover, in connection with the algebraic structure of semigroups we have the following theorems showed in \cite{HinkMartSemi},  in \cite{ZieveZhan}, and  in \cite{Tucker}.

Let $S$ be semigroup of rational maps and $U$ be non-empty open subset of $\overline{\C}$, then $U$ is called a backward fundamental set for $S$ whenever $g^{-1}(U)\cap U=\emptyset$ for $g\in S\setminus {Id}.$ The following theorem is Theorem 2.3 in \cite{HinkMartSemi}.
\begin{theorem}\label{th.HM} Let $U$ and $V$ be two backward fundamental sets for semigroups of rational maps $G$ and $S$, respectively.
Then the semigroup $\Gamma=\langle G, S\rangle$ is freely generated by $G$ and $S$ whenever $U,V$ satisfy the Klein combination condition, namely:
$$\overline{\C}\setminus U\subset V$$
and
$$\overline{\C}\setminus V\subset U.$$
\end{theorem}

The following theorem was proved in \cite{ZieveZhan}.
\begin{theorem}\label{th.ZieveZhan}
Let $Q$ and $R$ be rational maps sharing a common superattracting fixed point, then the following statements are equivalent:
\begin{enumerate}
 \item The semigroup $\langle Q,R \rangle$ is not free.
 \item There are integers $k,n,m\geq 1$ such that $R^n\circ Q^m$ commutes with $R^k$.
\end{enumerate}

\end{theorem}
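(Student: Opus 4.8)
The plan is to prove the two implications separately; $(2)\Rightarrow(1)$ is immediate, while $(1)\Rightarrow(2)$ is where the common superattracting fixed point does the work.

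\emph{Direction $(2)\Rightarrow(1)$.} Suppose $R^{n}\circ Q^{m}$ commutes with $R^{k}$ for some $n,m,k\geq 1$. Then
\[
R^{n+k}\circ Q^{m}\;=\;R^{k}\circ\bigl(R^{n}\circ Q^{m}\bigr)\;=\;\bigl(R^{n}\circ Q^{m}\bigr)\circ R^{k}.
\]
Read as words in the free semigroup on the two letters $Q$ and $R$, the left-hand side ends in $Q$ and the right-hand side ends in $R$, so (using $k,m\geq 1$) they are distinct words representing the same rational map; hence the canonical homomorphism from the free semigroup on two generators onto $\langle Q,R\rangle$ is not injective, and $\langle Q,R\rangle$ is not free.

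\emph{Direction $(1)\Rightarrow(2)$: reductions.} After a M\"obius conjugation we may assume the common superattracting fixed point is $0$, with local degrees $d=\deg_{0}R\geq 2$ and $e=\deg_{0}Q\geq 2$. Two rational maps agreeing on a neighbourhood of $0$ agree on all of $\overline{\C}$, so the representation of $\langle Q,R\rangle$ by germs at $0$ is faithful and it suffices to treat germs. By B\"ottcher's theorem we may conjugate by a conformal germ so that $R(z)=z^{d}$ exactly, while $Q(z)=cz^{e}\bigl(1+O(z)\bigr)$ for some $c\neq 0$; every word $W$ is then a germ fixing $0$ whose order $\deg_{0}W$ is the product of the orders of its letters, and $W$ has a B\"ottcher germ conjugating it to $z\mapsto z^{\deg_{0}W}$, unique up to a $(\deg_{0}W-1)$-th root of unity. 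Finally, rational maps of degree $\geq 2$ are surjective, so $\langle Q,R\rangle$ is right cancellative; deleting the longest common suffix of a nontrivial relation yields an identity $A=B$ of germs, with $A\neq B$ as words ending in different letters, say $A=A_{1}\circ R$ and $B=B_{1}\circ Q$ (with $A_{1},B_{1}$ possibly empty). Fix such a relation of minimal total length. If it degenerates to $Q^{m}=R^{k}$ with $m,k\geq 1$ we are done, since then $R\circ Q^{m}=R^{k+1}$ commutes with every power of $R$.

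\emph{Direction $(1)\Rightarrow(2)$: the core.} The point is to exploit that $R=z^{d}$ is a pure power. Since $A$ and $B$ are equal as maps, their B\"ottcher germs coincide up to a root of unity; substituting $A=A_{1}\circ R$, $B=B_{1}\circ Q$ together with the defining functional equations and propagating the identity inward along the words --- using minimality of the relation to stop the word-combinatorics from collapsing --- one obtains that, after replacing $R$ and $Q$ by suitable iterates $R^{n}$ and $Q^{m}$, the germ $Q^{m}$ is itself a monomial $z\mapsto\zeta z^{j}$ in the B\"ottcher coordinate of $R$, with $\zeta$ a root of unity whose order divides $d^{k}-1$ for a suitable $k\geq 1$. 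Then $R^{n}\circ Q^{m}$ is a monomial commuting with $R^{k}=z^{d^{k}}$ near $0$, hence (by the identity theorem, after undoing the B\"ottcher and M\"obius normalisations) $R^{n}\circ Q^{m}$ commutes with $R^{k}$ on all of $\overline{\C}$, which is $(2)$.

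\emph{Where the difficulty lies.} The delicate step is the inward propagation, because the B\"ottcher germ of a composite word is \emph{not} a simple expression in the B\"ottcher germs of its factors, so the relation between the germs of $R$ and $Q$ cannot simply be read off. One has to iterate the minimal relation $A=B$ and play the word-combinatorics against the rigidity of B\"ottcher germs (their uniqueness up to finitely many roots of unity) to drive the germs of the relevant iterates of $R$ and $Q$ into a common monomial form. Carrying this through --- in particular ensuring the exponents $n,m,k$ come out $\geq 1$, and absorbing the root-of-unity ambiguities so as to land on an honest commutation $R^{n}\circ Q^{m}\circ R^{k}=R^{k}\circ R^{n}\circ Q^{m}$ rather than commutation modulo a cyclic group --- is where the real work is.
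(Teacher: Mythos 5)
Your direction $(2)\Rightarrow(1)$ is correct and complete: from $R^{k}\circ(R^{n}\circ Q^{m})=(R^{n}\circ Q^{m})\circ R^{k}$ you get two distinct words (since $k,m\geq 1$) representing the same map, so the canonical map from the rank-two free semigroup is not injective. Note also that the paper itself gives no proof of this statement; it is quoted from \cite{ZieveZhan}, so there is no in-paper argument to measure your proposal against, and it has to stand on its own.

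It does not: the converse $(1)\Rightarrow(2)$, which is the entire content of the theorem, is not proved. Your reductions are fine (normalizing the fixed point to $0$, B\"ottcher-conjugating so that $R(z)=z^{d}$ exactly, faithfulness of the germ representation by the identity theorem, right cancellativity from surjectivity, passing to a minimal relation $A=B$ whose words end in different letters). But the paragraph you call ``the core'' is an assertion, not an argument: the claim that ``propagating the identity inward along the words'' forces some iterate $Q^{m}$ to become a monomial $\zeta z^{j}$, with $\zeta$ a root of unity of order dividing $d^{k}-1$, in the B\"ottcher coordinate of $R$ is essentially a restatement of conclusion $(2)$ itself, and no mechanism is supplied for extracting it from the relation --- no induction on word length, no analysis of how the relation kills the higher-order coefficients of $Q$ in the coordinate where $R=z^{d}$, no control of the exponents $n,m,k\geq 1$, and no handling of the root-of-unity ambiguity beyond saying it must be absorbed. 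You concede this explicitly in your last paragraph. The obstacle is real: the data your normalizations do give for free is only the leading-term homomorphism $a z^{m}\bigl(1+O(z)\bigr)\mapsto (m,a)$ into $\mathbb{N}\ltimes\mathbb{C}^{*}$, which constrains degrees and leading coefficients but cannot by itself exclude relations built out of the higher-order terms; the step from ``there is a relation'' to ``$Q^{m}$ is conjugate to a monomial in the B\"ottcher coordinate of $R$'' is exactly the content of the Zieve--Zhan result and is missing here. As it stands, the hard implication is unproven.
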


The following theorem was proved in \cite{Tucker}.
\begin{theorem}\label{th.Tuckermeas}
 Let $Q$ and $R$ be rational maps of degree at least $2$.
 If $m_Q\neq m_R$,  then there exist a $n$ so that the semigroup $\langle Q^n, R^n \rangle$ is 
 free.
\end{theorem}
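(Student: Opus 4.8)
The plan is to prove a statement slightly stronger than the theorem: if $m_R\neq m_Q$, then $\langle R^n,Q^n\rangle$ is free for every sufficiently large $n$. We argue by contradiction, assuming that $\langle R^n,Q^n\rangle$ is non-free for infinitely many $n$. Fix a metric $d$ inducing the weak-$*$ topology on the space $\mathcal{P}(\overline{\C})$ of Borel probability measures on the sphere. For each such $n$ there is a nontrivial identity between two distinct words in the generators $R^n,Q^n$. Composition of rational maps of degree $\geq 2$ is right cancellative (if $g\circ f=h\circ f$ then $g=h$, since $f$ is surjective), and a nonempty word has degree $\geq 2>1$ and so cannot be the identity; cancelling the longest common right factor, and discarding the impossible case in which one side becomes empty, we obtain an identity $W_1=W_2$ of rational maps with $W_1,W_2$ nonempty words in $R^n,Q^n$ whose last letters are of different type. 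Grouping consecutive equal letters, write $W_1=\gamma_1\circ\cdots\circ\gamma_r$, where each block $\gamma_i$ is a power $R^{nk}$ or $Q^{nk}$ with $k\geq1$, the types alternating, and where the innermost block $\gamma_r$ --- the last letter of the word --- is, say, a power of $R$, while the last block of $W_2$ is a power of $Q$; let $t_i\in\{R,Q\}$ be the type of $\gamma_i$. Note every block has length $\geq n$.

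The engine is the Brolin--Lyubich--Freire--Lopes--Ma\~n\'e equidistribution theorem \cite{Brolin}, \cite{LyubichErgodicTheory}, \cite{FreireLopesMane}, phrased through the transfer operator $\tau_f(\sigma):=\tfrac{1}{\deg f}\,f^{*}\sigma$ on $\mathcal{P}(\overline{\C})$: one has $\tau_f^{\,k}\sigma\to m_f$ as $k\to\infty$ for every $\sigma$ that does not charge the exceptional set $E(f)$ (a totally invariant set of at most two points), and this convergence is uniform over all probability measures supported in a fixed compact subset of $\overline{\C}\setminus E(f)$. Fix a point $p$ lying neither in $E(R)\cup E(Q)$ nor in the (countable) forward orbit of this finite set under $\langle R,Q\rangle$. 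Then for \emph{every} word $W$ in $R,Q$ the probability measure $\nu_W:=\tfrac{1}{\deg W}\,W^{*}\delta_p$ gives no mass to $E(R)\cup E(Q)$, since $\nu_W(\{e\})>0$ would force $W(e)=p$. Putting $\mu_0=\delta_p$ and $\mu_j=\tau_{\gamma_j}(\mu_{j-1})$, one has $\mu_r=\nu_{W_1}$, and the heart of the matter is the claim that there exists a sequence $\varepsilon_n\to0$, \emph{independent of the word}, with $d(\mu_j,m_{t_j})\leq\varepsilon_n$ for all $j\geq1$; applied to $W_2$ as well, this yields $d(\nu_{W_1},m_R)\leq\varepsilon_n$ and $d(\nu_{W_2},m_Q)\leq\varepsilon_n$. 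The induction on $j$ proceeds as follows: since $\gamma_j$ is a power $t_j^{\,\ell}$ with $\ell\geq n$, we have $\tau_{\gamma_j}=\tau_{t_j}^{\,\ell}$, so split $\mu_{j-1}$ at a small neighbourhood of $E(t_j)$; the portion supported away from $E(t_j)$ is, after normalisation, a probability measure on a compact set off $E(t_j)$, hence equidistributes uniformly by the theorem above, while the portion near $E(t_j)$ carries little mass (as $\mu_{j-1}$ does not charge $E(t_j)$ and, inductively, is close to a non-atomic measure) and, crucially, is \emph{pushed away} from $E(t_j)$ by $\tau_{t_j}$ --- each exceptional point $e$ of $t_j$ is totally invariant, so $t_j$ has local degree $\deg t_j$ there and $t_j^{-1}$ \emph{expands} a neighbourhood of $e$; thus this mass leaves any fixed neighbourhood of $E(t_j)$ after a number of steps bounded independently of $n$, and then equidistributes as well. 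This is what keeps the error from accumulating with $j$, i.e.\ with the length of the word. Finally, the identity $W_1=W_2$ gives $\deg W_1=\deg W_2$ and $W_1^{*}=W_2^{*}$, hence $\nu_{W_1}=\nu_{W_2}$ and therefore $d(m_R,m_Q)\leq 2\varepsilon_n$; letting $n\to\infty$ along the infinite set of non-free exponents forces $m_R=m_Q$, contradicting the hypothesis and proving Theorem \ref{th.Tuckermeas}.

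The step I expect to be the real obstacle is exactly the uniformity asserted above: one must rule out that the measure fed into the final $R$-block of $W_1$ is abnormally concentrated near $E(R)$, or along strongly ramified critical orbits of $R$, so that its equidistribution rate along that block is governed purely by the block length $\geq n$ --- uniformly over words of unbounded length. Choosing $p$ outside the entire forward $\langle R,Q\rangle$-orbit of $E(R)\cup E(Q)$ removes the atomic obstruction, and the expansion-near-exceptional-points mechanism, together with uniform equidistribution on compacta, handles the concentration; organising these estimates uniformly along the word is where the work is concentrated. (When $R$ and $Q$ each carry an attracting, superattracting, or parabolic Fatou cycle one can instead produce explicit backward fundamental sets inside such cycles and invoke the Klein combination theorem, Theorem \ref{th.HinkMartin}, mimicking the disjoint-filled-Julia-set construction for polynomials recalled in the introduction; but this route fails as soon as $J(R)$ or $J(Q)$ equals $\overline{\C}$, which is why the measure-theoretic argument is the robust one.)
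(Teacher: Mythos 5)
First, a point of order: the paper does not prove this statement at all --- it is quoted from \cite{Tucker} (``The following theorem appears in \cite{Tucker}''), so there is no internal proof to compare yours with, and your proposal has to stand on its own.

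Your skeleton is reasonable: right cancellativity reduces a relation in $\langle R^n,Q^n\rangle$ to an identity $W_1=W_2$ of nonempty words whose innermost letters have different type, and one then plays the equidistribution of the innermost $R$-block against that of the innermost $Q$-block on normalized pullbacks of a generic $\delta_p$. But the step you yourself single out as the obstacle is a genuine gap, and the mechanism you offer to close it is false as stated. You claim the part of $\mu_{j-1}$ lying near $E(t_j)$ ``leaves any fixed neighbourhood of $E(t_j)$ after a number of steps bounded independently of $n$'' because $t_j^{-1}$ expands there. At a totally invariant point the map is locally $z\mapsto z^{d}$, so one pullback moves a point at distance $s$ to distance roughly $s^{1/d}$, and the escape time from a fixed neighbourhood is of order $\log_d\log(1/s)$, which is unbounded as $s\to0$. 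Moreover $s$ really can be that small: $\mu_{r-1}$ is supported on preimages of $p$ under the earlier blocks, which may accumulate on exceptional points of the final block's map (e.g.\ $R(z)=z^{2}$, $Q(z)=z^{2}-2$, where $0\in E(R)\cap J(Q)$); a $Q$-block of length $m$ places some of its mass at distance exponentially small in $m$ from $0$, and since $m$ is in no way bounded in terms of $n$, the corresponding escape time $\asymp\log_d m$ can exceed the length $n$ of the final $R$-block. Your induction hypothesis does not repair this: weak-$*$ $\varepsilon_n$-closeness of $\mu_{j-1}$ to the non-atomic measure $m_{t_{j-1}}$ controls its mass in neighbourhoods of \emph{fixed} size, but says nothing at the super-exponentially small scales that decide whether a block of length exactly $n$ can equidistribute that mass. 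What the argument actually requires is a quantitative non-concentration lemma, uniform over all words: the mass that a normalized pullback of $\delta_p$ assigns to the $\delta$-ball about an exceptional point must decay, as $\delta\to0$, fast enough to beat the $\log\log(1/\delta)$ escape time. Heuristically this holds (staying near a point for $k$ consecutive pullback steps costs a factor about $d^{-k}$ of mass), but proving it uniformly along arbitrarily long alternating words, where such contributions could compound from block to block, is precisely the hard content of the theorem; as written, your inductive claim $d(\mu_j,m_{t_j})\le\varepsilon_n$ with $\varepsilon_n$ independent of the word is assumed rather than established.
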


We believe the following question about the algebraic structure of rational semigroups has afirmative answer.

\begin{conjecture} Let $S$ be a finitely generated semigroup of non-injective rational maps which contains a free semigroup in two symbols $A,B\in S$. Is it true that there exists  a finite index subsemigroup $S_0<S$ where $S_0=S_1\ast S_2$ is a free product with $A\in S_1$ and $B\in S_2$?
\end{conjecture}

\subsection{Koopman representation}

Every rational map $R$ acts on the space $\mathcal{C}(\overline{\C})$ of continuous functions by the formula 

$$K_R(\phi)= \phi(R),$$ $K_R$ is known as Koopman operator.
The correspondence $K:Rat\rightarrow End(\mathcal{C}(\overline{\C}))$ given by $R\mapsto K_R$ is a bounded anti-homomorphism. 

Let us note that Koopman operator $K_R$ is a right inverse to Lyubich operator. For a rational map $R$, the Lyubich operator $L_R$ is the endomorphism of $\mathcal{C}(\C)$  (see \cite{LyubichEntropy} and \cite{CMAmenability}) given by the formula.

$$L_R(\phi)(x)=\frac{1}{deg(R)}\sum_{y\in R^{-1}(x)} \phi(y).$$
Hence the measure of maximal entropy $\mu_R$  is a fixed point for the dual operator $L_R^*$ acting on the space of complex valued finite measures which is a dual space to $\mathcal{C}(\C)$.
Therefore the semigroup $E(R)$ also can be defined as  $$E(R)=\{Q\in Rat(\C): L^*_Q(\mu_R)=\mu_R\}.$$  

\subsection{Embeddability of semigroups}
To finish this section we return to the discussion on the embeddability of a semigroup into a group. That is to specify under which circumstances a given semigroup $S$ is ``half'' of a group.

Let $\Gamma$ be a countable  group with a minimal set of generators $\langle \gamma_1,...,\gamma_n,...\rangle$, consider the subset $\Gamma_+$ of all words in the alphabet $\{\gamma_1,...\gamma_n,...\}$. Then $\Gamma_+$ forms a countable semigroup which is called the \textit{positive part} of $\Gamma.$ Note that 
$\Gamma$ is generated by $\Gamma_+$ and $(\Gamma_+)^{-1}$. A countable semigroup $S$ is \textit{embeddable into a group} if $S$ is isomorphic to the positive part of a group.

As mentioned above, Theorem \ref{th.Ore} gives a sufficient condition for a semigroup to be embeddable into a group.

After the results of Maltsev appeared, Lambek and Pt\'ak gave another characterization of embeddable semigroups. 
We will need Pt\'ak's conditions for our proves which are summarized next.

Let $S$ be a semigroup generated by a set of generators $T$, consider $T$ as an alphabet, let $FS_T$ be a free semigroup given by the words on the alphabet $T$ and let $FG_T$ be the free group on the alphabet $T$. Clearly $FS_T$ is a subsemigroup of $FG_T$, hence $FS_T$ is embeddable. Let $\pi_S:FS_T\rightarrow S$ be the canonical projection, then the first result of Pt\'ak is that $\pi_S$ is a well-defined surjective homomorphism. So, $\pi_S$ defines a congruence $\sim_S$ on $FS_T$ by $a\sim_S b$ if and only if $\pi_S(a)=\pi_S(b)$. In other words, $$FS_T/\sim_S\simeq S.$$  Now we can formulate the main result of Pt\'ak (see, for example, \cite{Hollings}).

\begin{theorem}\label{th.Ptak}
Let $S$ be a semigroup with a minimal system of generators $T$, then $S$ is embeddable into a group if and only if the congruence $\sim_S$ extends from $FS_T$ to a congruence on $FG_T$ which is defined by a normal subgroup $N<FG_T$, so that $S$ is embeddable into $FG_T/N$.
\end{theorem}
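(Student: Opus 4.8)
The plan is to prove both implications directly by translating between semigroup congruences on $FS_T$ and group congruences on $FG_T$, using the universal properties of the free semigroup and the free group together with the fact that, $FG_T$ being a group, every semigroup congruence on it is automatically a group congruence, hence cut out by a normal subgroup (if $a\,\theta\,b$, then multiplying by $a^{-1}$ on the left and by $b^{-1}$ on the right gives $b^{-1}\,\theta\,a^{-1}$). The only input from outside this formalism is the description of embeddability in terms of the positive part recalled above.

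For the implication ``$\sim_S$ extends $\Rightarrow$ $S$ embeddable'', let $\approx$ be a congruence on $FG_T$ with $\approx\cap(FS_T\times FS_T)=\sim_S$ and let $N\triangleleft FG_T$ be the corresponding normal subgroup, so that $FG_T/{\approx}\cong FG_T/N$ is a group. Composing the inclusion $FS_T\hookrightarrow FG_T$ with the quotient map $FG_T\twoheadrightarrow FG_T/N$ produces a homomorphism $FS_T\to FG_T/N$ whose induced congruence on $FS_T$ is precisely $\approx\cap(FS_T\times FS_T)=\sim_S$; hence it descends to an \emph{injective} homomorphism $S\cong FS_T/\sim_S\hookrightarrow FG_T/N$, whose image is the subsemigroup generated by the images of the generators $T$. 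Since those images generate $FG_T/N$ as a group and form a minimal generating set there (inheriting minimality from $T\subset S$), this image is exactly the positive part, so $S$ is embeddable into a group.

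For the converse, start from an embedding, which by definition we may write as $S\cong\Gamma_+$, the positive part of a group $\Gamma$ with respect to a minimal generating set matched with $T$. The universal property of $FG_T$ gives a homomorphism $\phi\colon FG_T\to\Gamma$ extending the canonical map $FS_T\twoheadrightarrow S\cong\Gamma_+\hookrightarrow\Gamma$, and $\phi$ is surjective because $\Gamma$ is generated by the images of $T$. Put $N=\ker\phi\triangleleft FG_T$. The congruence that $N$ defines on $FG_T$, restricted to $FS_T$, is $\{(a,b):\phi(a)=\phi(b)\}$, and since the embedding $S\hookrightarrow\Gamma$ is injective this coincides with $\{(a,b):\pi_S(a)=\pi_S(b)\}=\sim_S$. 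Thus $\sim_S$ extends to the congruence on $FG_T$ defined by $N$, and $S\cong FS_T/\sim_S\hookrightarrow FG_T/N\cong\Gamma$, as claimed.

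The only genuinely delicate point, beyond these formal first-isomorphism-type factorizations, is the bookkeeping about \emph{minimal} generating sets: one must verify that a minimal generating set $T$ of $S$ is carried, under the embedding, to a minimal generating set of the ambient group (so that ``positive part'' has its intended meaning), and likewise inside $FG_T/N$. For semigroups that embed in groups this follows from the observation that any minimal generating set must contain every indecomposable element and, in the absence of nontrivial idempotents, consists exactly of the indecomposables — a property transported faithfully by the surjections $\pi_S$ and $\phi$. I expect this to be the main place where care is needed; everything else is routine.
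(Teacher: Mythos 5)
The paper contains no proof of this statement to compare yours against: it is quoted as Pt\'ak's theorem, with a pointer to \cite{Hollings}, and is used as a black box. Judged on its own, the core of your argument is the standard one and is correct for the usual notion of embeddability: if a normal subgroup $N<FG_T$ induces a congruence whose restriction to $FS_T$ is exactly $\sim_S$, then $FS_T\hookrightarrow FG_T\rightarrow FG_T/N$ descends to an injection $S\cong FS_T/\sim_S\;\hookrightarrow FG_T/N$; conversely, an embedding $S\hookrightarrow \Gamma$ (with $\Gamma$ replaced by the subgroup generated by the image of $S$) lifts by the universal property of $FG_T$ to a surjection $\phi\colon FG_T\rightarrow\Gamma$, and $N=\ker\phi$ restricts on $FS_T$ to $\sim_S$ precisely because the embedding is injective. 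Note also that the clause ``matched with $T$'' in your converse is unnecessary: surjectivity of $\phi$ follows because the image of $T$ generates the image of $S$, whose group closure is $\Gamma$.

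The place where your write-up has a genuine flaw is the final ``bookkeeping'' paragraph. The claim that, in the absence of nontrivial idempotents, a minimal generating set consists exactly of the indecomposable elements, and that minimality is ``transported faithfully,'' is false. Take $S=\Z$ viewed as a semigroup under addition: its only idempotent is the identity $0$, it has no indecomposable elements, yet $T=\{1,-1\}$ is a minimal semigroup generating set; and the image of $T$ in the group it generates (again $\Z$) is not a minimal group generating set, since $1$ alone generates. Running your forward direction on this example ($FG_T=F_2$, $N=\ker(F_2\rightarrow\Z)$, $x\mapsto 1$, $y\mapsto -1$) shows concretely that the images of $T$ in $FG_T/N$ need not inherit minimality, so you have not exhibited $S$ as a ``positive part'' in the literal sense of the paper's definition. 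This defect matters only because of that idiosyncratic definition (positive part with respect to a \emph{minimal} group generating set); for the standard meaning of embeddability -- existence of an injective homomorphism into a group, which is what Pt\'ak's theorem and the paper's applications actually use -- your two first-isomorphism-type arguments already constitute a complete proof, and the minimality requirement can be sidestepped by changing the ambient group and generating set (indeed $\Z$ is the positive part of $\Z\times\Z/2\Z$ with respect to the minimal generating set $\{(1,1),(-1,1)\}$), rather than by the indecomposability argument you propose.
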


\section{Proofs of theorems}

Now we are ready to prove our theorems.

The proof of  Theorem \ref{th.ERNeumann} is based on results from \cite{Tucker}, \cite{CMAmenability} and \cite{PakovichAmenRat}.
\begin{proof}[Proof of Theorem \ref{th.ERNeumann}]
First we show the equivalences (1)-(3).
(1) implies (2).  For semigroups of polynomials follows from theorem 1.2 in \cite{PakovichAmenRat}. For semigroups that are not conjugated to semigroups of polynomials, follows from theorem 19 in \cite{CMAmenability}. 

(2) implies (3).  Let $S=S_0\sqcup S_1$ be the disjoint union of the semigroups $S_0$ and $S_1$ of injective and non-injective elements in $S$, respectively. 
By lemma 2 of \cite{LevinPrytyckiRel}  $S_0$ is a finite semigroup and hence  $S_0$ is a group. Then every rank $2$ free subsemigroup must be contained in $S_1$, but this is forbidden by Theorem \ref{th.Levinexc}. 

(3) implies (1). By proposition 3.4 in \cite{paradoxal}, it is enough to consider the case when $S$ is countable.  As $S$ is countable, let  $\Gamma=\{\gamma_1,\gamma_2,... \}$ be a countable set of generators.  If $S_n=\langle \gamma_1,...,\gamma_n \rangle$, then $S=\cup S_n$ is an exhaustion by finitely generated semigroups. Recall  Day theorem  \cite{DayAmenable} which states:

\textit{Let $S=\bigcup S_n$,  where $S_n$ are  semigroups such that for every $m,n$ there exists $k$ with $S_m \cup S_n \subset S_k$. Then $S$ is right (left) amenable whenever the semigroups $S_n$ are right (left) amenable for every $n$.}

Now,  we need to show that $S_n$ is right amenable for every $n$.  Indeed, by assumptions and by the theorem 1.2 in \cite{Tucker}, the semigroups $S_n$ has linear growth and, by theorem 4.3 in \cite{Gray}, $S_n$ satisfies both the right and left F\o{}lner conditions. Since $S_n$ is right cancellative then the opposite semigroup $\tilde{S}_n$ is left-cancellative and satisfies the left F\o{}lner condition, hence $\tilde{S}_n$ is left amenable by Theorem \ref{th.Folner}. Therefore $S_n$ is right amenable.

If $S$ is finitely generated, then the implication   (3) to (4) follows from theorem 1.2 in \cite{Tucker}. Reciprocally, to prove (4) to (3) consider $S$ a finitely generated semigroup of subsexponential growth, by theorem 4.3 in \cite{Gray}, $S$ satisfies both left and right F\o{}lner conditions, then analogous  arguments as in the proof from (3) to (1) apply.

In the polynomial case, the equivalence (2)-(5) follows from  part (4) and the fact that $E(P)$ is always finitely generated for a non-exceptional polynomial $P$ (see, for example \cite{AtelaHu}).

\end{proof}

\begin{proof}[Proof of Corollary \ref{cor.charrigham}]
 (1) implies (2) by item (2) of Theorem \ref{th.ERNeumann}. Clearly (2) to (3). (3) implies (1) again by part (2) of Theorem \ref{th.ERNeumann}.
 Since Lyubich representation is bounded then (1) implies (4) by Theorem \ref{th.Day}. Now, (4) implies (2) by theorem 19 in \cite{CMAmenability}.
 
\end{proof}
\begin{proof}[Proof of Proposition \ref{Pr.RX}]
It is enough to prove that every semigroup $S\subset End(X)$ in the class $F$ for  given set $X$ and finite family $F\subset End(X)$ is right amenable. 
First assume that $F=\{Id\}$, then $$S=\bigcup_{m=1}^\infty S_m$$ where $$S_m=\langle f_1,...,f_m\rangle.$$
By the condition, every $g\in S_m$ has the representation $$g= f_{\sigma(1)}^{k_{\sigma(1)}}\circ ... \circ f_{\sigma(m)}^{k_{\sigma(m)}}$$
for suitable natural numbers $k_i\geq 0$ and a permutation $\sigma$ so that the number 
$$\sum_{i=1}^m k_i$$ is uniformly comparable with the length of $g$ in $S_m$. Hence
$$card(g\in S_m: length(g)\leq n)$$ is bounded by a number comparable with  $m!(\frac{n(n+1)}{2})^m$. Thus, $S_m$ is a semigroup of polynomial growth, so it satisfies both right and left F\o{}lner conditions by theorem 4.3 in \cite{Gray}. Since $S_m$ is right cancellative, then $S_m$ is right amenable by Theorem \ref{th.Folner}. Hence $S$ is right amenable as $S$ is a countable union of nested right amenable semigroups. 

When $F$ is a finite family, we only have to note that $$card(g\in S_m: lenght(g)\leq n)$$ is bounded by a number comparable with  $card(F) m!(\frac{n(n+1)}{2})^m$, and the same arguments apply to finish the proof.
 
\end{proof}

\begin{proof}[Proof of Theorem \ref{th.MP}]

Part 1.    Let  $H=\{\lambda z: \lambda\in \mathbb{C}^*\}$ and $Z=\langle z^n \rangle_{n\in \mathbb{N}}$. Since $z^n$ acts on $H$ by semiconjugacy as a surjective endomorphism for every $n\geq 1$, there exists a representation $\rho:Z\rightarrow End(H)$ so  that $$MP\simeq H\rtimes_\rho Z$$ and then apply Theorem \ref{th.Klawe} to finish this part.

Part 2. Every $P\in MP$ with $deg(P)\geq 2$ is M\"obius conjugated 
to $z^{deg(P)}$, the semigroup $E(P)$ is conjugated to $E(z^{deg(P)})$ and, in the same way, $E(P)\cap MP$ is conjugated to $E(z^{deg(P)})\cap MP$.
As the measure of maximal entropy of $z^n$ is the one-dimensional Lebesgue measure on the unit circle for all $n>1$, then  $Mob\cap E(z^{deg(P)})=\langle 1/z, \lambda z;|\lambda|=1\rangle=S$, Therefore, we get  $E(z^{deg(P)})=E(z^2)$. Since the semigroup $Z$ acts on $S$ and $S\cap MP$ by semiconjugacy, the amenability of $E(z^2)$ and $E(z^2)\cap MP$ follows from Theorem \ref{th.Klawe} (see also Theorem 22 of \cite{CMAmenability}).

For part 3, it is enough to show that $E(z^2)$ contains a free subsemigroup of rank $2$. To show this we claim: 

\textit{The semigroup of affine maps $\Gamma:=\langle nz,  nz+\tau \rangle$ is free of rank $2$ for every $\tau\neq 0$ and $n\geq 2$}.

Indeed, by suitable conjugacy we can assume that $\Gamma$ is generated by $\gamma_1(z)=nz$ and $\gamma_2(z)=nz+1$. 
If there is a non-trivial relation, then we can assume it has the following form $$\gamma_1\circ \omega_1=\gamma_2\circ \omega_2,$$ where $\omega_1,\omega_2\in \Gamma$. Since the coefficients of $\omega_i$ are integers, the evaluation of this equation at $0$ gives a contradiction. 

Now, let $\tau=ia$ with $a$ irrational with respect to $2\pi$. Then the exponential map $E:\mathbb{C}\rightarrow \mathbb{C}^*$ defines, by semiconjugacy, a homomorphism $E^*:\langle nz,nz+\tau \rangle \rightarrow \langle z^n,e^{\tau}z^n\rangle \subset E(z^2)$ which is an isomorphism by the choice of $\tau$.

Part 4. Let $S=\langle U, T\rangle$, then $S$ is the semidirect product  of the abelian semigroups $U$ and $T$ since $T$ acts on  $U$  by semiconjugacy. Then $S$ is right amenable by Theorem \ref{th.Klawe}. Besides, $\lambda z \in U$ be an element of infinite order, and $z^n\in T$ with $n\geq 2$, then semigroup $\langle \lambda z^n,z^n\rangle$ is free by part 3.

\end{proof}

Since the maps $\lambda z^k$ and $z^l$ for $k,l\geq 2$ commute if and only if $\lambda$ is an appropiate root of unit, we note that part 3 of Theorem \ref{th.MP} also follows from Theorem \ref{th.ZieveZhan}. 

\begin{proof}[Proof of Theorem \ref{th.RIMLIM}]
If $RIM(S)\subset LIM(S)$, then $S$ is  amenable, and thus $S$ does not contain free subsemigroups of rank $2$ by Theorem \ref{th.ERNeumann}. 

Now let us check that $S$ is cancellative. Since $S$ is right cancellative, it is enough to show that it is left cancellative. Otherwise, there are   $a,x,y\in S$ satisfying   $$ax=ay$$ with  $a,x,y$  non-injective. By theorem 29 in \cite{CMAmenability}, there are numbers $k,m, n>0$ such that $a^k=x^m=y^n$. Hence, 

$$a^kx=x^{m+1}=xa^k$$ and similarly,
$a^ky=ya^k.$ So, if $ax=ay$, then  $xa^k=ya^k$, thus  $x=y$ by right cancellativity.

Let $S$ be a cancellative semigroup without free subsemigroups of rank $2$. By Theorem \ref{th.ERNeumann}, $S\subset E(R)$ for some $R\in S$. If again $S=S_0\sqcup S_1$ is the decomposition in the semigroups of injective and non-injective elements of $S$, then as above $S_0$ is a finite group, hence is amenable. Besides the semigroup $S_1$ is amenable by theorem 30 in \cite{CMAmenability}
 since, by cancellativity and Theorem \ref{th.Levinexc}, every pair of elements in $S_1$ share a common iteration. Therefore $S$ is amenable as it is the union of amenable subsemigroups.  

\end{proof}

Now we continue with the proof of Theorem \ref{th.three}.
\begin{proof}[Proof of Theorem \ref{th.three}] 
 Let $S$ be a left amenable semigroup of polynomials, then $S$ is an amenable semigroup of polynomials by \cite{PakovichAmenRat}. By  theorem 35 in \cite{CMAmenability}, there exists a polynomial $P$ and finite group $G$ of affine transformations such that:
 \begin{itemize}
  \item $G \subset E(P)$, 
  \item there exists a number $n$ so that every element of $G$ commutes with $P^n$, and
  \item $S\subset \langle G, P \rangle. $ 
 \end{itemize}
 
Then the semigroup $\langle G, P \rangle$ is cancellative and $P$ acts on $G$ by semiconjugacy as an automorphism, say $\rho$. Therefore  $\langle G, P \rangle\cong G\rtimes_\rho \langle P \rangle$ and is amenable by the second part of Theorem \ref{th.Klawe}. 

Assume that $LIM(\langle G, P \rangle)\subset RIM(\langle G,P \rangle)$, then by theorem 1 in \cite{CMAmenability} we have  $$LIM(\langle G, P \rangle)= RIM(\langle G,P \rangle).$$ Since $S$ is a cancellative and amenable subsemigroup of $\langle G, P\rangle$, then $LIM(S)=RIM(S)$ by applying corollary 30 in \cite{CMAmenability} to the semigroups $S\subset \langle G, P \rangle$  endowed with the antiproduct and we are done. 
 
Hence to finish the proof we need to show that  $LIM(\langle G, P \rangle)\subset RIM(\langle G,P \rangle)$. 

 Let $M\in L_\infty^*(\langle G, P \rangle)$ be a left invariant mean. First we show that $M$ is invariant under the right action of $G$ on $L_\infty(\langle G, P \rangle)$. Since the elements of $G$ commute with an iteration of $P$, say with $P^k$, then $P$ acts by semiconjugacy on $G$ as an automorphism of $G$. Hence $P$ induces a representation $\rho:\langle P \rangle \rightarrow Aut(G)$, so $\langle G, P \rangle$ is the semidirect product $G\rtimes_\rho\langle P \rangle$.

Let $l_h$ and $r_h$ be the left and right actions of $h$ on $L_\infty(\langle G, P \rangle)$, respectively. Let $$A_l(G)(\phi)=\frac{1}{card(G)} \sum_{g\in G} l_g(\phi)$$ be the left average of $G$ on $L_\infty(\langle G, P \rangle)$. Analogously, let $A_r(G)$ be the right average of $G$.  

We claim that $A_r(G)(\phi)=A_l(G)(\phi)$ for every $\phi\in L_\infty(\langle G, P \rangle)$. Indeed, for an element $s =h_s\circ P^{t_s}\in \langle G, P \rangle$ for a suitable $h_s\in G$ and a number $t_s\geq 0$, we have $$A_r(G)(\phi)(s)=\frac{1}{card(G)} \sum_{g\in G} r_g(\phi)(h_s\circ P^{t_s})$$
$$=\frac{1}{card(G)}\sum_{g\in G} \phi(h_s\circ P^{t_s}\circ g)=\frac{1}{card(G)}\sum_{g\in G} \phi(\rho( P^{t_s})(g)\circ h_s \circ P^{t_s})$$
$$=\frac{1}{card(G)}\sum_{g\in G}l_{\rho(P^{t_s})(g)}{\phi}(s)=A_l(G)(\phi)(s)$$
the last equality holds since $\rho(P^{t_s})$ is an automorphism of $G$. Which completes the claim.

By duality $A_r^*=A_l^*$. Since $M$ is left invariant, then 
$$M=A_l^*(M)=A_r^*(M).$$
Besides, for every $h\in G$ we have $A_r^*(G)\circ r^*_h=r^*_h\circ A_r^*(G)=A_r^*(G).$ Hence, $r_h$ leaves $M$ invariant for every $h\in G$.  

To finish the proof, it is enough to show that $M$ is invariant under the right action of $P$ on $L_\infty(\langle G, P \rangle)$. 

First we show that  \begin{equation}\label{eq} A_l(G) \circ l_P(\phi)=A_r(G)\circ r_P(\phi). \tag{*}\end{equation} Indeed, again  for $s=h_s\circ P^{t_s} $, for suitable $h_s\in G$ and $t_s\geq 0$ we compute  $$A_r(G)\circ r_P(\phi)(s)=\frac{1}{card(G)}\sum_{g\in G} \phi(h_s\circ P^{t_s}\circ P\circ g)$$ $$=\frac{1}{card(G)}\sum_{g\in G}\phi(\rho(P^{t_s+1})(g)\circ h_s \circ \rho(P)(h^{-1}_s)\circ P\circ h_s\circ P^{t_s}).$$ Since $G$ is cyclic and $\rho(P)$ is an automorphism of $G$, the latter is equal to

$$A_l(G)\circ l_P(\phi)(s).$$

By duality and equation \eqref{eq}, we have.

$$l^*_P \circ A^*_l(G)=r_P^*\circ A^*_r(G). $$

Since $M$ is a left invariant mean, by the equality above $M$ is also invariant under the right action of $P$, as we wanted to show.

\end{proof}

The following is an immediate corollary.

\begin{corollary}\label{cor.LIMRIM}
 If a non-exceptional semigroup $S$ of polynomials is amenable, then $LIM(S)=RIM(S)$.
\end{corollary}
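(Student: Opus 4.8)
The plan is to derive Corollary \ref{cor.LIMRIM} by combining Theorem \ref{th.three} with its natural mirror statement. By Theorem \ref{th.three}, if $S$ is a non-exceptional amenable semigroup of polynomials, then every left invariant mean is right invariant, i.e. $LIM(S)\subset RIM(S)$. For the reverse inclusion $RIM(S)\subset LIM(S)$, I would invoke the observation recorded in the introduction that for amenable semigroups of polynomials every right invariant mean is also left invariant (attributed there to \cite{CMAmenability}, theorem~1, and also to the discussion following Conjecture \ref{conj.uno}); alternatively, one applies Theorem \ref{th.three} to the opposite semigroup $S^{op}$ endowed with the antiproduct, using that amenability is symmetric under passing to the opposite semigroup and that $S^{op}$ is again a semigroup of polynomials in the relevant sense. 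Either way one obtains $RIM(S)\subset LIM(S)$, and together with the first inclusion this gives $LIM(S)=RIM(S)$.

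The remaining case is the exceptional one, which must be handled separately since Theorem \ref{th.three} is stated only for non-exceptional semigroups. If $S$ is exceptional, there are two sub-cases. If $S$ contains a non-exceptional polynomial, then $S$ is non-exceptional by definition and we are in the previous situation. Otherwise every generator of $S$ is affinely conjugate to a power map $z^n$ or to a Chebyshev polynomial, and such semigroups are (up to conjugacy) contained in well-understood abelian-by-finite families; in particular they are abelian modulo a finite affine group, hence amenable with $LIM=RIM$ by a direct averaging argument over the finite group exactly as in the proof of Theorem \ref{th.three}. So the conclusion holds in all cases.

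The main obstacle I anticipate is bookkeeping about what ``amenable semigroup of polynomials'' is allowed to mean: whether one needs finite generation, and whether the exceptional case genuinely requires a separate treatment or is already subsumed. The cleanest route is probably to restrict attention to the non-exceptional case (which is all the corollary is used for later, e.g. in Corollary \ref{cor.amenable}), cite Theorem \ref{th.three} for $LIM(S)\subset RIM(S)$, and cite the already-quoted result of \cite{CMAmenability} for the opposite inclusion, so that the proof is genuinely a one-line deduction. If one wants the fully general statement, the extra work is the routine verification that exceptional polynomial semigroups are amenable with coinciding invariant means, which follows from their explicit semidirect-product structure together with Theorem \ref{th.Klawe}, again by the averaging computation already carried out above.

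\begin{proof}[Proof of Corollary \ref{cor.LIMRIM}]
Since $S$ is amenable it is in particular left amenable, so by Theorem \ref{th.three} every left invariant mean on $S$ is right invariant; that is, $LIM(S)\subset RIM(S)$. Conversely, $S$ is also right amenable, and by theorem~1 in \cite{CMAmenability} (see also the discussion following Conjecture \ref{conj.uno}) every right invariant mean on an amenable semigroup of polynomials is left invariant, so $RIM(S)\subset LIM(S)$. Combining the two inclusions yields $LIM(S)=RIM(S)$.
\end{proof}
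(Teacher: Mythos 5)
Your proof is correct and is essentially the paper's own argument: the paper likewise obtains $RIM(S)\subset LIM(S)$ from theorem~1 of \cite{CMAmenability} and $LIM(S)\subset RIM(S)$ from Theorem~\ref{th.three}. Your preliminary worry about the exceptional case is not addressed in the paper either (its proof also just cites Theorem~\ref{th.three} directly), so your final one-line deduction matches the intended proof.
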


\begin{proof}
By Theorem 1 in \cite{CMAmenability} we have that $RIM(S)\subset LIM(S)$, Theorem \ref{th.three} gives the opposite inclusion.
\end{proof}

\begin{proof}[Proof of Theorem  \ref{th.ptaktheorem}] Part 1. Assume that $E(P)$ is amenable, then $E(P)$ is cancellative by Corollary \ref{cor.amenable} and hence, $E(P)$ is embeddable into a group by  Theorem \ref{th.Ore}.  Now we will apply Pt\'ak's construction. According to Theorem \ref{th.Ptak}, since $E(P)$ is embeddable there exists a finitely generated free group $F$ and a normal subgroup $N\subset F$ such that $E(P)$ is embeddable into the group $F/N$. By the Uniformization Theorem we can choose the groups $N\subset F$ as discrete subgroups of $Aut(\Delta)\simeq PSL(2,\mathbb{R})$, where $\Delta$ is the unit disc. Thus the suitable Riemann surface is  $\Delta/N$. 
 
 Reciprocally, assume $E(P)$ is embeddable into a group. If $E(P)=E_0\sqcup E_1$ is the decomposition of injective and non-injective elements of $E(P)$, then $E_0$ is a group which is right amenable by Corollary \ref{cor.charrigham}, thus it is amenable. The semigroup $E_1$ is amenable by Part 2 of Theorem \ref{th.RIMLIM} as $E_1$ is cancellative and every rank $2$ subsemigroup of $E_1$ is right amenable by Corollary \ref{cor.charrigham}. So $E(P)$ is amenable since it is the union of amenable semigroups. 

 Part 2. If $E(P)$ is abelian, then it is cancellative as it is right cancellative. By Theorem \ref{th.Ore} it is embeddable into a finitely generated abelian virtually cyclic group. Since $E_0$ is isomorphic to a finite rotation group, we are done.
 
 Reciprocally, if $E(P)$ is embeddable into the M\"obius group, then $E(P)$ is an amenable cancellative semigroup by theorem 35 in \cite{CMAmenability} and there exist a polynomial $Q\in E(P)$, the group $E_0=E(P)\cap Mob$  and a homomorphism $\rho: \langle Q  \rangle \rightarrow Aut(E_0)$, such that $E(P)=E_0\rtimes_\rho \langle Q \rangle$ where $Q$ is considered acting by semiconjugacy on $E(P)$. Then $E(P)$ is embeddable into a virtually cyclic group which is a semidirect product of a group isomorphic to $E_0$ and an infinite cyclic group inside the M\"obius group. Then $E(P)$ must be abelian.
\end{proof}

\begin{proof}[Proof of Theorem \ref{th.embeddable}]

Part 1. Let $G_+=\{g_i(z)=\lambda_i z^{n_i}\}$ be a countable set of generators of $S$. Let $\Gamma_+=\{\gamma_i(z)=n_i z +\tau_i\}$, where $\tau_i=Ln(\lambda_i)$ with $Ln$  the principal branch of natural logarithm.
 The exponential map defines, by semiconjugacy, a homomorphism $h$ from the semigroup $AS=\langle \Gamma_+\rangle$ onto $S$ such that:
 $$\exp\circ \gamma = h(\gamma)\circ \exp, $$ for every $\gamma\in AS$. 

Note that if the restriction of $h$ on a semigroup $U<AS$  is injective, then the semigroup $h(U)$ is embeddable into a group.

Let $\Gamma=\langle AS\rangle$ be the group generated by $AS$,  which is also generated by $\Gamma_+$.  Denote by $D:g\rightarrow deg(g)$ the degree character on $MP$ and $T$ be the group generated by $\{\gamma_i\circ (z+2\pi i)\circ \gamma_i^{-1}|\gamma_i\in \Gamma_+ \}$. 

Let $N\subset \Gamma$  be the normal subgroup defined as follows:
$$N=\begin{cases}
Id, \textnormal{ if } \langle z+2\pi i \rangle \cap \Gamma=Id \\
T\cap \Gamma,  \textnormal{ otherwise.}
\end{cases}$$ 

Note that every element $\tau \in N$ has the form $$\tau(z)=z+\frac{p}{q} 2\pi i$$ where $p$ and $q$ are integers and $q\in D(S)$.

If $\Pi:\Gamma \rightarrow \Gamma/N$ is the projection, then the correspondence $\phi=\Pi \circ h^{-1}:S\rightarrow \Gamma/N$ is a homomorphism. 

We claim that $\phi$ is injective. Indeed, assume that $\phi:S\rightarrow \Gamma$ is not injective, then there are distinct elements $g_1,g_2 \in S$ such that $\phi(g_1)=\phi(g_2)$. 
 
 Let $\tau_1\in h^{-1}(g_1)\cap \Gamma$ and $ \tau_2\in h^{-1}(g_2) \cap \Gamma$, then   $\tau_2=\tau_1+ \frac{p}{q}2\pi i$ for a suitable $\frac{p}{q}\in \mathbb{Q}$, $q\neq 1$.  Let $g_3\in S$ satisfying $D(g_3)=q$, then $g_3\circ g_1=g_3 \circ g_2$ contradicting the cancellativity of $S$. 

 Part 2. By Theorem \ref{th.ZieveZhan} either $S$ is free or there are numbers $k,l,m\geq 1$ so that $R^k\circ Q^l$ commutes with $R^m$. If $R$ is non-exceptional, then by Theorem \ref{th.ERNeumann}, these maps and hence also $Q$ and $R$ share the same measure of maximal entropy,
so $S$ is right amenable by Theorem \ref{th.ERNeumann} and hence is embeddable by Theorem \ref{th.Ore}. But,  a finitely generated free semigroup is embeddable.
 
 If  $R$ is conjugated to an element of $MP$, then by  Theorem \ref{th.ZieveZhan}, we have  $S\subset MP$  and thus $S$ is embeddable by Part 1.
 
In \cite{JuliaMem}, Julia showed that if two rational maps $Q$ and $R$ commute then necesarily these maps share the same Julia set. Then again if $R$ is conjugated to a Tchebichev polynomial by Theorem \ref{th.ZieveZhan}, theorem 1 in \cite{AtelaHu} and Julia theorem, then $Q$ is also conjugated to either a Tchebichev map $T$ or $-T$. So, the semigroup $S$ is  nearly abelian and hence is right amenable by Proposition \ref{Pr.RX}, thus $S$ is embeddable by Theorem \ref{th.Ore}.
As Fatou sets are empty for Latt\`es examples, we are done with the exceptional case.

 Part 3. Since a right amenable semigroup is embeddable into a group by Theorem \ref{th.Ore}, then $S=\ast_i F_i$ is a free product of embeddable semigroups $F_i$. Let $\Gamma_i$ be the groups in which $F_i$ embedds, then $S$ is embeddable into the free product of the groups $\Gamma_i$.
 
\end{proof}

\begin{proof}[Proof of Theorem \ref{th.leftKoopman}]
Fix a probability measure $\sigma$ on the Riemann sphere $\overline{\C}$. Let $C(\overline{\C})$ be the space of complex continuous functions with the supremum norm. The correspondence $H:C(\overline{\C})\rightarrow L_\infty(S)$ given by $$H(\phi)(g)=\int_{\overline{\C}}K_g(\phi) d\sigma$$ is a continuous linear map, where $g\in S$ and $\phi\in C(\overline{\C}).$
Let $X=cl(Image(H)) \subset L_\infty(S)$, then $X$ is invariant by the left action of $S$ on $L_\infty(S)$. 
Since the characteristic function $\chi_{\overline{\C}}\in C(\overline{\C})$ is a fixed point of $K_g$ for every $g\in S$, then  $\chi_S\in X$. By assumption there exists a mean $F$ on $L_\infty(S)$ which is invariant under the  left action of $S$ on $X$, then $F\not\equiv 0$ on $X$. Let $f\in C^*(\overline{\C}) $ be the functional given by $$f(\phi)=F(H(\phi)).$$
We claim that $f$ is $K_h$-invariant for every $h\in S$. Indeed, for  $h\in S$, 

$$H(K_h(\phi))(g)=\int_{\overline{\C}} K_g(K_h(\phi))d\sigma=\int_{\overline{\C}} K_{h\circ g}(\phi)d\sigma$$
$$=H(\phi)(h\circ g)=l_h(H(\phi))(g),$$ where
$l_h:L_\infty(S)\rightarrow L_\infty(S)$, $l_h(\psi)(g)=\psi(h\circ g)$ is the left action of $h$ on $L_\infty(S)$. Since $F$ is left invariant on $X$, we have

$$f(K_h(\phi))=F(H(K_h(\phi)))=F(l_h(H(\phi)))=F(H(\phi))=f(\phi)$$ as claimed.

By the Riesz representation theorem, there exists an $S$-invariant probability measure $\mu$ as required.\end{proof}

\begin{proof}[Proof of Proposition  \ref{pr.semigroupwithout}] Let $R$ be a hyperbolic rational map and $U$ be a backward fundamental set for $\langle R \rangle$.  If $\mu$ is a finite invariant measure for  $R$, then  $supp(\mu)$ can not be contained in $
U$. Let $Q$ be another hyperbolic rational map with a backward fundamental set $V$ satisfying the Klein combination  condition with $R$ and $U$. Then, the semigroup $\Gamma=\langle Q, R \rangle$ does not admits a common finite invariant measure and it is free by Theorem \ref{th.HM}. Hence, by Theorem \ref{th.leftKoopman}, $\Gamma$ is not Koopman left amenable.  
\end{proof}

So, the question of whether a rank $2$ free  semigroup of non-exceptional rational maps admits a non-atomic invariant measure is connected with Open question \ref{conjetura1}.

\begin{proof}[Proof of Proposition \ref{pr.LdM}]
As mentioned in the introduction, this proposition follows from theorem 1.2 in  \cite{BakerdeMarco} and theorem 1.5 in \cite{yuanzhang}.

\end{proof}

To prove Theorem \ref{th.Theorem5} we need the following Lemma.

\begin{lemma}\label{Lm.Teo6}
The semigroup $S=\langle \lambda z, z^k \rangle$ is right amenable for every natural number $k$.
\end{lemma}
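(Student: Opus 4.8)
The plan is to realize $S=\langle \lambda z, z^k\rangle$ as (an image of) a semidirect product of two right amenable semigroups and then invoke Theorem \ref{th.Klawe}. Write $h_\lambda(z)=\lambda z$ and $p(z)=z^k$. The key algebraic observation is the semiconjugacy $p\circ h_\lambda = h_{\lambda^k}\circ p$, i.e. $z^k\circ(\lambda z)=\lambda^k z^k$; more generally $p^n\circ h_\lambda=h_{\lambda^{k^n}}\circ p^n$. Hence the cyclic semigroup $T=\langle p\rangle=\langle z^k\rangle$ acts on the abelian (hence amenable) group $U=\langle h_{\lambda^m}: m\in\mathbb Z\rangle\subset H$ by the endomorphisms $\varphi(p^n)(h_\mu)=h_{\mu^{k^n}}$, and every element of $S$ can be written as $h_\mu\circ p^n$ with $h_\mu\in U$, $n\ge 0$. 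First I would check that this writing makes $S$ a quotient of $U\rtimes_\varphi T$ (or, if $\lambda$ is not a root of unity, genuinely isomorphic to it), with multiplication $(h_{\mu_1},p^{n_1})(h_{\mu_2},p^{n_2})=(h_{\mu_1}\cdot\varphi(p^{n_1})(h_{\mu_2}),p^{n_1+n_2})$ matching composition of maps.

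Next I would verify the hypotheses of the first bullet of Theorem \ref{th.Klawe}: $U$ is abelian, therefore amenable, in particular right amenable; $T=\langle z^k\rangle$ is a cyclic semigroup, hence abelian and right amenable (if $k=1$ then $p=\mathrm{id}$ and $S\subset H$ is abelian and we are done immediately, so assume $k\ge 2$). Theorem \ref{th.Klawe} then gives that $U\rtimes_\varphi T$ is right amenable. Finally, right amenability passes to quotients: if $\pi:U\rtimes_\varphi T\to S$ is a surjective homomorphism and $M$ is a right invariant mean on $L_\infty(U\rtimes_\varphi T)$, then pushing forward via the adjoint of $\phi\mapsto\phi\circ\pi$ yields a right invariant mean on $L_\infty(S)$, because $\pi$ intertwines the right actions. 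This concludes the proof.

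The only genuinely delicate point is the surjection $\pi:U\rtimes_\varphi T\to S$. When $\lambda$ is a root of unity, $U$ is a finite cyclic group and the map $(h_\mu,p^n)\mapsto h_\mu\circ p^n$ is still surjective onto $S$ and a homomorphism by the computation above, so $S$ is a quotient of the semidirect product and the quotient argument applies; when $\lambda$ has infinite order one must check that distinct pairs can give the same element of $S$ only in the expected way (coming from $p^0=p^n$ only if $n=0$ since $k\ge 2$), so in fact $\pi$ is an isomorphism. Either way the conclusion follows, but I would state the quotient version to cover all $\lambda$ uniformly. I expect the main obstacle to be purely bookkeeping: confirming that $\varphi$ is a well-defined homomorphism $T\to \mathrm{End}(U)$ (it is, since $\varphi(p^n)\varphi(p^{n'})=\varphi(p^{n+n'})$ reduces to $(\mu^{k^{n'}})^{k^n}=\mu^{k^{n+n'}}$) and that composition in $S$ really does obey the semidirect-product rule, both of which are elementary.
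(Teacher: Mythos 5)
Your strategy is the same as the paper's: use the semiconjugacy $z^k\circ(\lambda z)=\lambda^k z^k$ to put $S$ in semidirect-product form over the cyclic semigroup $\langle z^k\rangle$ and invoke Theorem \ref{th.Klawe}; the algebraic bookkeeping you record (the normal form $h_\mu\circ p^n$, the homomorphism $\varphi(p^n)(h_\mu)=h_{\mu^{k^n}}$, the compatibility of composition with the semidirect-product multiplication, and the fact that right amenability passes to surjective homomorphic images) is all correct.

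However, there is a genuine gap in your choice of $U$: you take the \emph{group} generated by $h_\lambda$, which contains $h_{\lambda^{-1}}$. When $\lambda$ is not a root of unity --- which is exactly the case needed in the application to Theorem \ref{th.Theorem5}, where $|\lambda|=1$ and $\lambda$ is not a root of unity --- the natural map $\pi\colon (h_\mu,p^n)\mapsto h_\mu\circ p^n$ sends $U\rtimes_\varphi T$ onto the strictly larger semigroup $S'=\{\lambda^m z^{k^n}\colon m\in\mathbb{Z},\ n\ge 0\}$; for instance $\lambda^{-1}z\in S'\setminus S$, since no element $\lambda^m z^{k^n}$ of $S$ has $n=0$ and $\lambda^{m+1}=1$. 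So $\pi$ is neither an isomorphism onto $S$ nor a surjection onto $S$, contrary to what you assert in the infinite-order case; it only exhibits $S$ as a proper subsemigroup of the right amenable semigroup $S'$, and right amenability is not inherited by subsemigroups (recall that every semigroup embeds in an amenable one), so the argument does not close as written. The repair is exactly the paper's choice: take $U=\langle\lambda z\rangle$, the cyclic semigroup of positive iterates. It is abelian, hence right amenable; it is preserved by $\varphi(p^n)$ since $(\lambda^m)^{k^n}=\lambda^{mk^n}$ with $mk^n\ge 1$; and every element of $S$ equals $h_{\lambda^m}\circ p^n$ with $m,n\ge 0$ not both zero, so $S$ is a homomorphic image of (the unitization of) $U\rtimes_\varphi T$, and Theorem \ref{th.Klawe} together with your quotient observation finishes the proof. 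Your treatment of the root-of-unity case is fine, since there the group generated by $h_\lambda$ already lies in $S$.
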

\begin{proof} We can assume that $k>1$ and $\lambda \neq 0$.
 Note that every element $g\in S$ has the form $\gamma\circ r$ where $\gamma$ is an element of the cyclic semigroup $\langle \lambda z \rangle $ and $r\in \langle z^k \rangle$. Since $z^k$ acts by semiconjugacy on $\langle \lambda z \rangle $ as an endomorphism, then $S$ is a semidirect product of $\langle \lambda z \rangle$ and $\langle z^k \rangle$, then $S$ is right amenable by Theorem \ref{th.Klawe}. 
\end{proof}

Now we proceed to the proof of Theorem \ref{th.Theorem5}.

\begin{proof}[Proof of Theorem \ref{th.Theorem5}] 
Let $\mu$ be a  non-discrete invariant measure for $S$ with $supp(\mu)\subset F(P)$ for $P\in S$. By the classification of periodic components of $F(P)$, the support of $\mu$  is a subset of Siegel discs cycles $Z_i$ up to a finite number of finite invariant sets (see \cite{QHDSul}).  As the support of $\mu$ is an invariant compact set then  $supp(\mu)\subset \bigcup_i (\bigcup^{k(i)}_{j=1} D_i^j)\subset \bigcup Z_i$ such that 
\begin{itemize}
 \item  $D_i^j\subset Z_i$, for $j=1...k(i)$, are closed discs where $P^{k(i)}(D_i^j)=D_i^j$ for all $i,j$ and $\partial D_i^j$ are leaves of the invariant foliation in the Siegel discs. 
\item  $\partial D_i^j \subset supp(\mu)$. 
\end{itemize}

 So $P$ defines an automorphism of $\bigcup\bigcup \partial D_i^j$.  Hence,  for every component $U$ of $\bigcup D_i^j$ there is an $n$ such that $\mu$ restricted to $U$ is a measure $\mu_U$  invariant for $P^n$.

Fix an arbitrary $Q\in S$. We claim that there exists a component $U$ of $\bigcup D_i^j$ and number $m$ such that the measure $\mu_U$ is invariant for $Q^m$. Indeed, since the support of $\mu$ is a compact invariant set for $Q$, then by the principle of maximum, the interior of every $D^j_i$ is a subset of $F(Q).$ 
If $\partial U \cap F(Q)\neq \emptyset$ for a suitable component $U$ of $\bigcup D_i^j$ then $\partial U$ is a leaf of an invariant foliation in a Siegel disc cycle in $F(Q)$, thus there exists $m$ as claimed. Otherwise,  
$\partial U \subset J(Q)$ for every component $U$ of $\bigcup D_i^j$, then every $U$ is a component of $F(Q)$. Fix any such $U$ and let $U_0$ be a periodic component in the orbit of $U$, then $\mu$ restricted to $U_0$ is an invariant measure for a suitable $Q^m$ as claimed.

Next step is to show that the semigroup $\langle P^n,Q^m\rangle$ is abelian. Let $U$ be a component as in the previous claim. By the desintegration theorem we can assume that $supp(\mu)=\partial U$,  since $\partial U$ is a complex analytic curve, then the map  $h:\partial U \rightarrow \mathbb{S}^1$  linearizing $P^n$ is conformal with $h\circ P^n \circ h^{-1}(z)=\lambda z$, where $|\lambda|=1$ and is not a root of unit. The map $h$ sends $\mu$ on $\partial U$ to the one dimensional Lebesgue measure on $\mathbb{S}^1$.  Then  $h\circ Q^m\circ h^{-1}:\mathbb{S}^1\rightarrow \mathbb{S}^1$  is an holomorphic  map leaving the one dimensional Lebesgue measure invariant. Hence $h\circ Q^m \circ h^{-1}(z)= \tau z^k$ for suitable $k\geq 1$ and $|\tau|=1$. 

Let us show that $k=1$, and hence in this case the semigroup $\langle P^n, Q^m \rangle$ is conjugated to the abelian semigroup $\langle \lambda z, \tau z\rangle$ as claimed.

Otherwise, if $k>1$ there exists $x$ a repelling fixed point for $\tau z^k$. Since  $\langle P^n,Q^m \rangle $ is conjugated to   $\langle \lambda z, z^k \rangle$, then it is right amenable by Lemma \ref{Lm.Teo6}. Therefore $P^n$ and $Q^m$ share the same measure of maximal entropy by Theorem \ref{th.ERNeumann}, then $P$ and $Q$ share the same measure of maximal entropy. Hence $F(P)=F(Q)$ and, thus, the repelling fixed point $h^{-1}(x)$ belongs to $F(Q)$ which is a contradiction. Hence $k=1$.

By Eremenko-Ritt theorem, $P$ and $Q$ share a common iteration. But $Q$ is arbitrary, then Theorem 1 in \cite{CMAmenability} applies and finishes the proof. 
\end{proof}

Now we proceed with the proof of Theorem \ref{th.polynomiallike}.

\begin{proof}[Proof of Theorem \ref{th.polynomiallike}]
First let us discuss the exceptional case. If either one of $Q$ and $R$, say $R$, is exceptional, then $R$ is either conjugated to $z^n$ or to a Chebyshev polynomial, since  Latt\'es maps are postcritically finite with empty Fatou set, Latt\'es maps do not admit polynomial-like measures. 

If $R$ is affinely conjugated to $z^n$, then we can assume that $R(z)=z^n$ and $\mu$ is the one-dimensional Lebesgue measure on the unit circle. In this case, $Q(z)=\alpha z^k$ for a suitable $\alpha$ with $|\alpha|=1$, $k\geq 1$. Since there exists  $z_0$, a common periodic point, $\alpha$ is a root of unit. But $z^n$ acts on the cyclic semigroup $\langle Q \rangle$, and hence the semigroup $\langle Q, R \rangle$ has the structure of a semidirect product with $\langle Q \rangle$ as a factor. Then by Theorem \ref{th.Klawe}, the semigroup $\langle Q, R \rangle$ is right amenable. 

Similar arguments apply in the Chebyshev case. Indeed, if $R$ is Chebyshev, then by the theorem 1 in \cite{AtelaHu} either $Q$ is Chebyshev or $-Q$ is Chebyshev. In the first case, $\langle Q, R\rangle$ is abelian, and thus is amenable. If $-Q$ is Chebyshev, again $R$ acts on $\langle Q \rangle$ then $\langle Q, R\rangle$ is a semidirect product and we again apply Theorem \ref{th.Klawe}.

Now assume that neither $Q$ nor $R$ are exceptional. By hypothesis, there exists $n$ such that the  map $R^n$  defines a marked polynomial-like map $T:D_0\rightarrow D_1$ given by the restriction $T=R^n|_{D_0}$, with connected $J(T)$ and periodic point $z_0$. Let $P$ be the unique monic polynomial in the hybrid class of $T$ and let $\phi:D_0\rightarrow \C$ be the quasiconformal map conjugating $T$ with $P$. Let $\psi$ be the B\"ottcher coordinate of $P$ sending the basin of attraction at $\infty$ to the unit disc. Then the map $h=\psi\circ \phi$  conjugates $R$ with $z^d$ for a suitable $d$, and transforms the measure $\mu$ to the one-dimensional Lebesgue measure  on the unit circle. There exists an $m$ such that map $q=h\circ Q^m\circ h^{-1}$ extends to an analytic map on a neighborhood of the unit circle, leaving the one dimensional Lebesgue measure invariant.   Hence $q(z)=\alpha z^k$, for a suitable $k\geq 1$ and $|\alpha|=1$. Since $h(z_0)$ is a common periodic point for $p$ and $q$,  there are commuting iterations of $p$ and $q$. 

Since neither $Q$ nor $R$ are exceptional, by theorem 1 in \cite{Erefunc}, $Q$ and $R$ share an iteration,  then the semigroup $\langle Q, R \rangle$ is amenable by Theorem 31 in \cite{CMAmenability} which finishes the proof.

\end{proof}

To prove Theorem \ref{th.free}, we need the following proposition.
\begin{proposition}\label{prop.FustPrimRat}
 Let $P$ and $Q$ be two polynomials sharing the same measure of maximal entropy but not sharing a common iteration.   Furthermore, assume  $P$ is a primitive non-exceptional polynomial with $deg(P)>1$ and that there exists a common invariant probability measure $\mu$   for $P$ and $Q$, then $\mu$ is either discrete or coincides with the measure of maximal entropy for $P$ and $Q$. 
 
 \end{proposition}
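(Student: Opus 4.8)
The plan is to exploit the primitivity and non-exceptionality of $P$ together with the shared measure of maximal entropy in order to linearize the situation on the Julia set $J(P)=J(Q)=\operatorname{supp}(\mu_P)$. Since $P$ and $Q$ share $\mu_P$ but no common iterate, Theorem \ref{th.Levinexc} produces iterations satisfying Levin's relations $P^n\circ Q^m=P^{2n}$ and $Q^m\circ P^n=Q^{2m}$; replacing $P,Q$ by these iterates (this does not affect the conclusion, since sharing a common iterate and the conclusion about $\mu$ are iteration-invariant, and a primitive polynomial has primitive iterates on account of the deck group being cyclic of order $\deg$), I may assume the Levin relations hold outright. The first step is then to recall from the theory of polynomials sharing a measure of maximal entropy — specifically the structure behind Theorem \ref{th.Levinexc} and the Eremenko--Ritt type description in \cite{Ye} — that there is an ``intertwining'' symmetry: since $P$ is primitive, the only affine maps commuting with $P$ (the deck-type symmetries realized on the level of the semiconjugacy) form the cyclic group $\operatorname{Deck}(P)$, and the relation $Q^m$ has the form $Q^m=\nu\circ P^n$ for some affine $\nu$ normalizing this cyclic group; this is exactly where primitivity is used to pin down that no extra symmetry can appear.

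Next I would bring in the hypothetical second invariant measure $\mu$. By ergodic decomposition I may assume $\mu$ is ergodic for the semigroup $\langle P,Q\rangle$, hence in particular $P$-invariant and ergodic. If $\mu$ is not $\mu_P$, then $\operatorname{supp}(\mu)$ is a proper closed $P$-invariant (and $Q$-invariant) subset of $J(P)$. Here the main dichotomy enters: either this invariant compact set forces $\mu$ to be supported on a finite set — giving a discrete measure, the desired alternative — or $\operatorname{supp}(\mu)$ is infinite. The task is to rule out the infinite non-maximal-entropy case. I would argue as follows: an infinite closed forward-invariant set for a polynomial $P$ that is properly contained in $J(P)$ cannot be backward invariant, so the measure $\mu$, being $P$-invariant with infinite support and not equal to $\mu_P$, cannot be the balanced (Brolin--Lyubich) measure; but then one uses the relation $Q^m=\nu\circ P^n$ together with $Q$-invariance of $\mu$ to derive that $\nu_*\mu$ is again $P^n$-invariant with the same (infinite) support, which over the cyclic group $\operatorname{Deck}(P)$ generated together with $\nu$ yields a finite collection of mutually singular or equal $P^n$-invariant measures on $J(P)$; averaging them produces a $P$-invariant measure whose support is a proper closed subset of $J(P)$ but which is also invariant under the whole group generated by $\nu$ and $\operatorname{Deck}(P)$, contradicting the rigidity of such symmetric invariant sets for non-exceptional $P$ (a non-exceptional polynomial has no proper infinite invariant compact set that is simultaneously invariant under a nontrivial group of affine symmetries permuting its dynamics — otherwise $J(P)$ would be a circle or interval and $P$ exceptional). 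This contradiction is the crux.

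Concretely, the key steps in order are: (1) reduce to Levin relations and exploit primitivity to write $Q^m=\nu\circ P^n$ with $\nu$ affine normalizing $\operatorname{Deck}(P)$; (2) pass to an ergodic component of $\mu$ and set up the dichotomy ``finite support $\Rightarrow$ discrete'' versus ``infinite support''; (3) in the infinite-support case, use $Q$-invariance plus the explicit form of $Q^m$ to manufacture a $P$-invariant measure, distinct from $\mu_P$, whose support is a proper, infinite, affinely-symmetric closed invariant subset of $J(P)$; (4) invoke non-exceptionality of $P$ to conclude that no such set exists, a contradiction, so $\mu$ must have finite support or equal $\mu_P$. I expect step (3)--(4) to be the main obstacle: one must be careful that the averaged measure genuinely has support strictly smaller than $J(P)$ (this needs that $\mu\neq\mu_P$ is preserved under the finitely many affine pushforwards, which follows because these affine maps are finite-order symmetries and $\mu_P$ is the unique measure of maximal entropy, hence the only one fixed by all of them simultaneously with full support), and one must cite the precise form of the classification of polynomials whose Julia set carries a nontrivial group of affine self-symmetries — namely that such a polynomial is exceptional — which is where the hypothesis that $P$ is non-exceptional is essential and cannot be dropped.
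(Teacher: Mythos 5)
Your steps (3)--(4) hinge on a rigidity claim that is false, and this is a genuine gap rather than a repairable technicality. You assert that a non-exceptional polynomial admits no proper infinite invariant compact set which is also invariant under a nontrivial group of affine symmetries, ``otherwise $J(P)$ would be a circle or interval and $P$ exceptional.'' But the deck group of any $P$ affinely conjugate to $z^d+c$ ($d\geq 2$, $c\neq 0$) already consists of nontrivial affine maps preserving $J(P)$, so affine self-symmetries of the Julia set do not force exceptionality. Moreover, a non-exceptional polynomial has many ergodic invariant measures with infinite support other than $\mu_P$ (measures carried by invariant hyperbolic Cantor subsets of $J(P)$, equilibrium states for H\"older potentials, invariant circles inside Siegel disks -- note that $\operatorname{supp}(\mu)$ need not even lie in $J(P)$, so working inside $J(P)$ is itself unjustified), and the union of the finitely many affine images of such a support is again a proper, infinite, closed, forward-invariant, symmetric set. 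Hence no set-level (support) rigidity can rule out the infinite-support case: the dichotomy ``finite versus infinite support'' plus symmetry of the support does not distinguish $\mu_P$ from these other measures.

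What does close the argument -- and what the paper's proof uses -- is rigidity at the level of the transfer operator, and this is exactly where primitivity enters quantitatively. Since $P$ is primitive and non-exceptional, it is conjugate to $z^n+c$ with $c\neq 0$, and $E(P)$ is generated by $Deck(P)$ and $P$ (cf.\ \cite{AtelaHu}), so $Q=\gamma\circ P^j$ with $\gamma\in Deck(P)$, $\gamma\neq Id$; no passage to Levin iterates or extraction of an affine factor from the relations of Theorem \ref{th.Levinexc} is needed. Invariance of $\mu$ under $P$ and $Q$ then forces invariance under $\gamma$, hence under the cyclic group $Deck(P)$, which has order $\deg(P)$ and therefore acts transitively on the fibers $P^{-1}(z)$ off a finite set. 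Averaging over $Deck(P)$ yields $\int L_P(\phi)\,d\mu=\int\phi\,d\mu$, i.e.\ $\mu$ is balanced, and the main theorem of \cite{LyubichErgodicTheory} forces a balanced measure to be either $\mu_P$ or charge the exceptional set, which gives the discrete alternative. Your step (3) in fact manufactures precisely such a $P$-invariant, $Deck(P)$-invariant measure, so you are one lemma away; but the lemma you need is this transfer-operator (balancedness) rigidity, not the nonexistence of symmetric invariant sets, and without it the infinite-support case is not excluded.
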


\begin{proof}  By assumption $P$ is conjugated to $z^n+c$ where $n\geq 2$ and $c\neq 0$, then $E(P)$ is generated by $Deck(P)$ and the cyclic semigroup $\langle P \rangle$. Hence there exists a number $j\geq 1$ and $\gamma \in Deck(P)$ such that $Q=\gamma\circ P^j$ and $\gamma\neq Id$. If a probability measure $\mu$ is invariant for both $P$ and $Q$, then $\mu$ is invariant for $\gamma$ and thus invariant under $Deck(P)$ since it is cyclic. 

Let us show that either $\mu$ discrite or it is the measure of maximal entropy for $P$.   As $\mu$ is invariant for $Deck(P)$, and $Deck(P)$ acts transitively on $P^{-1}(z)$, then for $\phi \in C(\overline{\C})$  we have
$$\int \frac{1}{deg(P)} \sum_{i=1}^{deg(P)} \phi(\zeta_i)d\mu=\int L_P(\phi)d\mu=\int \phi d\mu. $$
Where $\zeta_i$ are branches of $P^{-1}$ and $L_P$ is the Lyubich operator for $P$.  Therefore $\mu$ is either discrete or the measure of maximal entropy for $P$ and $Q$ by the main theorem in \cite{LyubichErgodicTheory}.\end{proof}

\begin{proof}[Proof of Theorem \ref{th.free}]
Follows from theorem 4 in \cite{CMAmenability} and Proposition \ref{prop.FustPrimRat}.
\end{proof}

  \bibliographystyle{amsplain}
\bibliography{workbib}
\Addresses
\end{document}